\newtheorem{theorem}{Theorem}[section]
\newtheorem{lemma}[theorem]{Lemma}
\theoremstyle{definition}
\newtheorem{definition}[theorem]{Definition}
\newtheorem{remark}[theorem]{Remark}
\numberwithin{equation}{section}
\newcommand{\curl}{\mathop\mathrm{curl}}
\renewcommand{\div}{\mathop\mathrm{div}}
\newcommand{\dist}{\operatorname{dist}}
\def\Inf{\operatornamewithlimits{inf\vphantom{p}}}
\begin{document}

\title[ Damped  Navier--Stokes system] {Vanishing viscosity limit for global
attractors for the damped  Navier--Stokes system with
stress free boundary conditions}
\author[V.V. Chepyzhov, A.A. Ilyin, and S.V Zelik]
{Vladimir Chepyzhov${}^{1,3}$, Alexei Ilyin ${}^{1,2}$ and Sergey
Zelik${}^{2,4}$}

\dedicatory{To Edriss Titi on the occasion of his 60-th birthday
with warmest regards }

\begin{abstract}
We consider  the damped and driven Navier--Stokes system with stress free
boundary conditions and the damped
Euler system in a bounded domain  $\Omega\subset\mathbf{R}^2$.
We show that the damped Euler system has a (strong)
global attractor in~$H^1(\Omega)$. We also
show that
 in the vanishing viscosity limit the global attractors of the
Navier--Stokes system converge in the non-symmetric Hausdorff distance
in $H^1(\Omega)$ to the the strong global attractor
of the limiting damped Euler system (whose solutions are not necessarily unique).
\end{abstract}

\subjclass[2000]{35B40, 35B41, 35Q35}
\keywords{Damped Euler equations, global attractors,
vanishing viscosity limit}

\thanks{The research of V.~Chepyzhov and A.~Ilyin was carried out in the
Institute for Information Transmission Problems, Russian Academy of
Sciences, at the expense of the Russian Science Foundation (project
14-50-00150). The work of S.~Zelik was supported in part by
the RFBR grant 15-01-03587}

\address{
 \newline ${}^1$ Institute for Information Transmission Problems, Moscow 127994, Russia,
 \newline ${}^2$ Keldysh Institute of Applied Mathematics, Moscow 125047, Russia,
 \newline ${}^3$ National Research University Higher School of Economics, Moscow 101000,
 Russia,
 \newline ${}^4$ University of Surrey, Department of Mathematics, Guildford, GU2 7XH, UK.}

\email{chep@\,iitp.ru}

\email{ilyin@\,keldysh.ru}

\email{s.zelik@\,surrey.ac.uk}

\maketitle

\section{Introduction
\label{Sec0}}

In this paper, we study from the point of view of global attractors  the 2D damped and driven
Navier--Stokes system
\begin{equation} \label{NS}
\left\{
\begin{array}{l}
\partial _{t}u+(u,\nabla )u+\nabla p+ru=\nu \Delta u+g(x), \\
\div u=0,\quad u(0)=u^{0},
\end{array}
\right.
\end{equation}
and the corresponding limiting ($\nu =0$) damped/driven Euler
system
\begin{equation}\label{E}
\left\{
\begin{array}{l}
\partial _{t}u+(u,\nabla )u+\nabla p+ru=g(x), \\
\div u=0,\quad u(0)=u^{0}.
\end{array}
\right.
\end{equation}
Both systems are considered in a bounded multiply connected smooth
domain $\Omega \subset \mathbb{R}^{2}$ with
standard non-penetration boundary condition
\begin{equation}\label{bcE}
u\cdot n|_{\partial \Omega }=0,
\end{equation}
while the system \eqref{NS} is supplemented with the so-called
stress free or slip boundary conditions
\begin{equation}\label{bcNS}
u\cdot n\vert_{\partial\Omega}=0,\qquad\curl u\vert_{\partial\Omega}=0.
\end{equation}
The Laplace operator with \eqref{bcNS} commutes
the Leray projection. These boundary conditions
 guarantee the absence of the boundary layer and yield
the conservation of enstrophy
in the unforced and undamped case of~\eqref{E}. They are also
convenient for studying the limit as $\nu\to0^+$ of the
individual solutions of the 2D Navier--Stokes system~\cite{Bar, Lio}.

Systems \eqref{NS} and \eqref{E} are relevant  in geophysical
hydrodynamics and the  damping term $-ru$ describes the Rayleigh or
Ekman friction and parameterizes the main dissipation occurring in
the planetary boundary layer (see, for example, \cite{Ped}).
The viscous  term $-\nu \Delta u$ in system \eqref{NS} is
responsible for the small scale dissipation. We also observe that
in physically relevant cases we have $\nu \ll r|\Omega|$.

The damped and driven 2D Euler and Navier-Stokes systems attracted
considerable attention over the last years and were studied from
different points of view. The regularity, uniqueness, and stability
of the stationary solutions for \eqref{E}
 were studied in \cite{BCT, Sau, W}. The vinishing viscosity limit for system
\eqref{NS} was studied for steady-state statistical solutions in~\cite{CoRa}.

In the presence of the damping term the  weak attractor for the
system \eqref{E} was
 constructed in \cite{IlyinEU} in the phase space $H^1$.  In the trajectory phase
space the weak attractor was constructed in~\cite{BF, CVrj}.

The dynamical effects of the damping term $-ru$ in the case of the
Navier--Stokes system~\eqref{NS} were studied in~\cite{IMT} on the
torus, on the 2D sphere, and in bounded (simply connected) domain
$\Omega$ with boundary conditions  \eqref{bcNS}.
Specifically, it was shown that the fractal dimension of the global
attractor $\mathscr A_\nu$ satisfies the estimate
\begin{equation}\label{min}
\dim_f\mathscr A_\nu\le \min
\left(c_1(\Omega)\,
\frac{\|\curl g\|\,|\Omega|^{1/2}}{\nu r}\,,\
c_2(\Omega)\,\frac{\|\curl g\|^2}{\nu r^3}\right),
\end{equation}
where $|\Omega|$ is the area of the spatial domain. This estimate is sharp
in the limit $\nu\to 0^+$ and the lower bound
is provided by the corresponding  family of Kolmogorov flows.
Furthermore, the constants $c_1$ and $c_2$ are given explicitly
for the torus $\Omega=\mathbb{T}^2$ and for the sphere $\Omega=\mathbb{S}^2$.
The case of an elongated torus $\mathbb{T}^2_\alpha$ with periods
$L$ and $L/\alpha$, where $\alpha\to 0^+$ was studied in
\cite{I-T-JMFM}, where it was shown that~\eqref{min} still holds
for $\mathbb{T}^2_\alpha$ and is sharp as both $\alpha\to0$ and $\nu\to0$.

The essential analytical tool used in the proof of~\eqref{min},
especially in finding explicit values of $c_1$ and $c_2$,
is the Lieb--Thirring inequality. New bounds for the
Lieb--Thirring constants for the anisotropic torus were
recently obtained in \cite{IL} with applications to the
system~\eqref{NS} on~$\mathbb{T}^2_\alpha$.

One might expect that in the case of the damped Navier--Stokes system~\eqref{NS} in
$\mathbb{R}^2$ in the space of finite energy solutions the attractor
$\mathscr A_\nu$ exists and its fractal dimension is
bounded by the second  number on the right-hand side in \eqref{min}.
It was recently shown in \cite{IPZ} that it is indeed the case:
\begin{equation}\label{R2}
\dim_f\mathscr A_\nu\le \frac{1}{16\sqrt{3}}
\frac{\|\curl g\|^2}{\nu\,r^{3}}.
\end{equation}
Moreover, due to convenient scaling available for $\mathbb{R}^2$ this estimate
 of the dimension is included  in \cite{IPZ}
in the family of estimates depending on the norm of $g$ in the scale
of homogeneous Sobolev spaces $\dot H^s(\mathbb{R}^2)$, $-1\le s\le 1$;
the case $s=1$ being precisely~\eqref{R2}.

Estimates for the degrees of freedom for the damped Navier--Stokes system~\eqref{NS}
expressed in terms of various finite dimensional projections were
obtained in~\cite{IT}. They are also of the order~\eqref{min}.

We point out two important differences between the damped
Navier--Stokes system \eqref{NS} and the damped Euler system~\eqref{E}
which make the construction of the global attractor for~\eqref{E}
less straightforward. The first  is the absence for~\eqref{E} of the
instantaneous smoothing property of solutions and explains why the existence
of only a weak attractor was first established \cite{IlyinEU}.  The second  is that the uniqueness is only known for the
solutions with bounded vorticity~\cite{Yud2}
and is  not known in
the natural Sobolev space ${H}^{1}$, which makes the
trajectory attractors very convenient for \eqref{E},
see \cite{BF, CVrj, CVbook, CV2, VC-UMN}.
The
trajectory attractors for~\eqref{E} in the weak topology of ${H}^{1}$
were constructed in \cite{CVrj} (see also \cite{BF}) and in
\cite{Ch15} for the non-autonomous case. In addition,
the upper semi-continuous dependence as $\nu\to 0+$ of the trajectory attractors of
the system \eqref{NS} on the torus was established in \cite{CVrj}  in the weak
topology of $H^{1}(\Bbb{T}^{2})$.

The existence of the strong ${H}^{1}$ trajectory attractors for the dissipative Euler system
\eqref{E} on the 2D torus was proved in~\cite{CVZ} under the assumption that
$\curl g\in L^{\infty }$ which was used to prove the enstrophy equality. The strong attraction and compactness for the
trajectory attractor were established using the energy method developed in
\cite{B, Gh, MRW, Rosa} for the equations in unbounded, non-smooth domains or
for equations without uniqueness. This method is based on the corresponding energy
 balance  for the solutions
and leads
to the asymptotic compactness of the solution semigroups or collections the
trajectories.

Most closely related to the present work is the paper \cite{CIZ}
where the strong global and trajectory $W^{1,p}$-attractors were constructed for the
system \eqref{E} in $\mathbb{R}^{2}$. The crucial equation of the enstrophy balance
is proved there in the Sobolev spaces $W^{1,p}$, $2\le p<\infty$ without the assumption on $g$ that
guarantees the uniqueness of solutions on the attractor. Instead the authors
used the fact that in the 2D case the vorticity satisfies a scalar transport
equation, and the required enstrophy equality directly follows from the
results of \cite{DiP-Lio}.

In unbounded domains the damped Navier--Stokes and Euler systems can be studied from the point of view  of uniformly local spaces (where the energy is infinite) and  one of the main issues is the
proof of the dissipative estimate, which is achieved by means of delicate
 weighted estimates. In  the uniformly local spaces in the
viscous case $\nu>0$ the global attractors for \eqref{NS}  in the strong
topology were constructed in~\cite{Ze13}, see also~\cite{Ze07,Ze8} for similar
results in channel-like domains. In the inviscid case the strong attractor for
\eqref{E} in the uniformly local $H^1$ space was recently constructed in
\cite{CZ15}.

In the present  paper we study the convergence of the global attractors
$\mathscr{A}_{\nu }$ of the system \eqref{NS},
\eqref{bcNS} in the vanishing  viscosity limit $\nu \rightarrow 0^+$,
and our main result is as follows.
 The system \eqref{E}, \eqref{bcE} has a global
attractor $\mathscr{A}_{0}\Subset H^1(\Omega)$. For every
$\delta$-neighbourhood $\mathscr{O}_\delta$
of $\mathscr{A}_{0}$ in $H^1(\Omega)$
there exits $\nu(\delta)>0$ such that
\begin{equation}\label{convergence}
\mathscr{A}_{\nu}\subset \mathscr{O}_\delta(\mathscr{A}_{0})\qquad\text{for all}
\quad\nu\le\nu(\delta),
\end{equation}
where $\mathscr{A}_{\nu}$ for $\nu>0$ are the attractors of the damped Navier--Stokes system~\eqref{NS}, \eqref{bcNS}.

We point out that
despite the fact that the dimension of $\mathscr A_\nu$ can be
of order $1/\nu$ as $\nu\to 0^+$ (at least in the periodic case and the special family
Kolmogorov-type forcing terms) the limiting attractor $\mathscr A_0$
is, nonetheless, a compact set in $H^1(\Omega)$.

This paper has the following structure. In Section~\ref{Sec1} we define the function
spaces, paying attention the  case when the domain
$\Omega$ is multiply connected, and
construct the global
attractors $\mathscr{A}_{\nu }$ for~\eqref{NS}, \eqref{bcNS}. In Section~\ref{Sec2}
we prove the existence of weak solutions
of the damped  Euler system~\eqref{E}.
We adapt the theory of renormalized solutions from~\cite{DiP-Lio}
to the vorticity equation in a  bounded domain
which gives us the crucial equation of
the enstrophy balance
for an arbitrary weak solution of~\eqref{E}. In Section~\ref{Sec3} we consider the
generalized solution semigroup for the system \eqref{E} and define weak and strong
global attractors for the generalized semigroup. We first construct a
weak global attractor $\mathscr{A}_{0}$ in ${H}^{1}$ for \eqref{E}
and then we prove the asymptotic compactness of the generalized semigroup
which gives that the weak global attractor $\mathscr{A}_{0}$ is,
in fact, the ${H}^{1}$ strong global attractor. In Section~\ref{Sec4} we prove
\eqref{convergence}.

\section{Equations and function spaces}
\label{Sec1}

We shall be dealing with  the   damped and driven  Navier--Stokes system
\eqref{NS} with boundary conditions \eqref{bcNS}
and the corresponding limiting ($\nu=0$) damped Euler system~\eqref{E}
with standard non-penetration condition~\eqref{bcE}.

Both systems are studied in  a bounded domain $\Omega\subset\mathbb{R}^2$.
We consider the general case when $\Omega$ can be multiply connected
with boundary
$$
\partial\Omega=\Gamma=\Gamma_0\cup\Gamma_1\cup\dots\cup\Gamma_k.
$$
In other words, $\Gamma_0$ is the outer boundary, and the $\Gamma_i$'s are
the boundaries of $k$ islands inside $\Gamma_0$. We assume that $\partial \Omega$ is
smooth ($C^2$ will be enough) so that there exists a well-defined
outward unit normal $n$ and also an extension operator $E$:
$$
E:H^2(\Omega)\to H^2(\mathbb{R}^2),\qquad \|Eu\|_{H^2(\mathbb{R}^2)}\le
\mathrm{const}\|u\|_{H^2(\Omega)}.
$$

We now introduce  the required function spaces and  their
orthogonal decompositions. We set
$$
\mathcal{H}=\{u\in \mathbf{L}^2(\Omega),\ \div u=0,\ u\cdot n\vert_{\partial\Omega}=0\}.
$$
The following orthogonal decomposition holds \cite[Appendix 1]{TemNS}:
\begin{equation}\label{H}
\mathcal{H}=H_0\oplus H_c,
\end{equation}
where
$$
H_0=\{u\in \mathbf{L}^2(\Omega),\ \div u=0,
\ u\cdot n\vert_{\partial\Omega}=0, \ u=\nabla^\perp\varphi, \ \varphi\in H^1_0(\Omega)\},
$$
that is, the vector functions in $H_0$ have a unique single valued
stream function $\varphi$ vanishing at all components of the
boundary $\partial\Omega$. Here $\varphi$ is a scalar function, and
$$
\nabla^\perp\varphi:=\{-\partial_2\varphi,\partial_1\varphi\}=-\curl\varphi,
\quad  u^\perp:=\{-u^2,\,u^1\}.
$$
Accordingly, the orthogonal complement to $H_0$ in $\mathcal{H}$ is
the $k$-dimensional space of harmonic (and hence infinitely smooth) vector functions:
$$
H_c=\{u\in \mathbf{L}^2(\Omega),\ \div u=0,\ \curl u=0, \ u\cdot n\vert_{\partial\Omega}=0 \},
$$

In the similar way, for smoothness of order one  we have
$$
\mathcal{H}^1:=\{u\in \mathbf{H}^1(\Omega),\ \div u=0,\ u\cdot n\vert_{\partial\Omega}=0\}=
H_1\oplus H_c,
$$
where $H_c$ is as before and
$$
H_1=\{
 u=\nabla^\perp\varphi, \ \varphi\in H^2(\Omega)\cap H^1_0(\Omega)\},\qquad
\|u\|_{H_1}=\|\Delta \varphi\|.
$$
For smoothness of order two
$$
\mathcal{H}^2:=\{u\in \mathbf{H}^2(\Omega),\ \div u=0,\ u\cdot n\vert_{\partial\Omega}=0\}=
H_2\oplus H_c,
$$
where
$$
H_2=\{
 u=\nabla^\perp\varphi, \ \varphi\in H^3(\Omega)\cap H^1_0(\Omega)\},\qquad
\|u\|_{H_2}=\|\nabla\Delta \varphi\|.
$$

Corresponding to the second boundary condition in~\eqref{bcNS}
is the following closed subspace in $H_2$:
$$
H_2^0=\{
 u=\nabla^\perp\varphi, \ \varphi\in H^3(\Omega)\cap H^1_0(\Omega)
 \cap\{\Delta\varphi\vert_{\partial\Omega}=0\}\}.
$$
The space of all divergence free vector functions of class $\mathbf{H}^2(\Omega)$
satisfying the boundary conditions~\eqref{bcNS} is denoted by $\mathcal{H}^2_0$:
\begin{equation}\label{H20}
\mathcal{H}^2_0=H^0_2\oplus H_c.
\end{equation}

The orthonormal basis in $H_0$ is made up of vector functions
$$
u_j=\lambda_j^{-1/2}\nabla^\perp\varphi_j,
$$
where $\lambda_j$ and $\varphi_j$ are the eigenvalues and eigenfunctions of
the scalar Dirichlet Laplacian \cite{I93}
$$
-\Delta\varphi_j=\lambda_j\varphi_j,\quad\varphi_j\vert_{\partial\Omega}=0,\quad
0<\lambda_1<\lambda_2\le\lambda_3\dots\to+\infty.
$$
In fact,
$$
\|u_j\|^2=\lambda_j^{-1}(\nabla^\perp\varphi_j,\nabla^\perp\varphi_j)=
\lambda_j^{-1}\|\nabla\varphi_j\|^2=1.
$$
Furthermore, since on scalars
\begin{equation}\label{curlonscalars}
\curl\nabla^\perp=-\curl\curl=\Delta,\qquad \curl=-\nabla^\perp,
\end{equation}
the $u_j$'s satisfy~\eqref{bcNS}, and
the system $\{u_j\}_{j=1}^\infty$ is the complete
orthonormal basis of eigen vector functions with eigenvalues
$\{\lambda_j\}_{j=1}^\infty$ of the vector Laplacian
\begin{equation}\label{VecLap}
\Delta=\nabla\div-\curl\curl
\end{equation}
with boundary conditions~\eqref{bcNS}:
$$
-\Delta u_j=\curl\curl u_j=\lambda_j u_j.
$$

We can express the fact that a vector function $u$ belongs to
$H_0$, $H_1$, or $H_2^0$ in terms of its Fourier coefficients as follows.
Let
\begin{equation}\label{sp_basis}
u=\sum_{j=1}^\infty c_ju_j,\qquad c_j=(u,u_j)=\lambda_j^{-1/2}(u,\nabla^\perp\varphi_j),
\end{equation}
where (setting $\omega:=\curl u$)
$$
(u,\nabla^\perp\varphi_j)=
(u^\perp,\nabla\varphi_j)=-(\div u^\perp,\varphi_j)=(\curl u,\varphi_j)=
(\omega,\varphi_j).
$$
This gives that
$$
\aligned
u\in H_0&\Leftrightarrow \sum_{j=1}^\infty c_j^2=\|u\|^2=\|\omega\|^2_{H^{-1}(\Omega)}<\infty,\\
u\in H_1&\Leftrightarrow \sum_{j=1}^\infty \lambda_j c_j^2=\|\omega\|^2<\infty,\\
u\in H_2^0&\Leftrightarrow \sum_{j=1}^\infty \lambda_j^2c_j^2=\|\curl\curl u\|^2=\|\omega\|^2_{H^{1}_0(\Omega)}<\infty.
\endaligned
$$

The basis in the $k$-dimensional space of harmonic vector functions $H_c$
is given in~\cite[Appendix 1, Lemma~1.2]{TemNS} in terms of the gradients
of harmonic multi valued functions. In our 2D case it is more convenient
to construct a basis in $H_c$ in terms of single valued stream functions.

\begin{lemma} \label{L:basis}
The system $\{\nabla^\perp\psi^j\}_{j=1}^k$ is a basis in $H_c$.
Here $\psi^j$ is the solution in  $\Omega$ of the equation
$\Delta\psi^j=0$, where $\psi^j=0$ at all the components of the
boundary $\Gamma$ except for $\Gamma_j$, where $\psi^j=1$.
\end{lemma}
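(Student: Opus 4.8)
The plan is to verify three things: that each field $\nabla^\perp\psi^j$ genuinely lies in $H_c$, that the $k$ fields are linearly independent, and that they span. Since the excerpt already records that $\dim H_c=k$, the spanning could in principle be replaced by a dimension count once independence is established; I nonetheless prefer to argue spanning directly, as it makes the whole structure of $H_c$ transparent and renders the proof self-contained.

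First I would check membership. Each $\psi^j$ solves the Dirichlet problem $\Delta\psi^j=0$ with boundary data equal to $1$ on $\Gamma_j$ and $0$ on the remaining components; since $\partial\Omega$ is $C^2$ and the boundary data is (locally) constant, elliptic regularity gives $\psi^j\in H^2(\Omega)$, so $\nabla^\perp\psi^j\in L^2$. Divergence-freeness is immediate, $\div\nabla^\perp\psi^j=0$, and curl-freeness follows from \eqref{curlonscalars}, since $\curl\nabla^\perp\psi^j=\Delta\psi^j=0$. For the boundary condition I use that $\psi^j$ is constant on each component $\Gamma_i$, so its tangential derivative there vanishes; writing $\nabla^\perp\psi^j\cdot n$ as (up to sign) the tangential derivative $\partial_\tau\psi^j$ along $\partial\Omega$ shows $\nabla^\perp\psi^j\cdot n|_{\partial\Omega}=0$. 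Hence $\nabla^\perp\psi^j\in H_c$.

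Next, linear independence. If $\sum_{j=1}^k c_j\nabla^\perp\psi^j=0$ on the connected domain $\Omega$, then $\nabla^\perp\bigl(\sum_j c_j\psi^j\bigr)=0$, so $\sum_j c_j\psi^j$ is constant. Evaluating on $\Gamma_0$, where every $\psi^j$ vanishes, shows this constant is $0$; evaluating on $\Gamma_i$, where $\psi^j|_{\Gamma_i}=\delta_{ij}$, then forces $c_i=0$ for each $i$. Thus the $\nabla^\perp\psi^j$ are independent.

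For spanning, take an arbitrary $u\in H_c$. Because $\div u=0$ in the planar domain $\Omega$, $u$ admits a stream function locally, and the only obstruction to its being single valued is the flux of $u$ around each island. The key (and most delicate) step is to see this flux vanishes: for a loop $\gamma$ encircling $\Gamma_i$, the annular region $A$ between $\gamma$ and $\Gamma_i$ lies in $\Omega$, so the divergence theorem on $A$ together with $\div u=0$ gives $\int_\gamma u\cdot n\,ds=\int_{\Gamma_i}u\cdot n\,ds=0$, the latter by the boundary condition $u\cdot n|_{\partial\Omega}=0$. Hence all periods vanish and $u=\nabla^\perp\phi$ for a single valued $\phi$. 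Then $\curl u=0$ gives $\Delta\phi=0$ via \eqref{curlonscalars}, and $u\cdot n|_{\partial\Omega}=0$ again forces $\phi$ to be constant on each $\Gamma_i$; normalizing the free additive constant so that $\phi|_{\Gamma_0}=0$ and setting $a_i:=\phi|_{\Gamma_i}$, uniqueness for the Dirichlet problem yields $\phi=\sum_{i=1}^k a_i\psi^i$, whence $u=\sum_{i=1}^k a_i\nabla^\perp\psi^i$. This shows the $\nabla^\perp\psi^j$ span $H_c$ and completes the argument. The one genuinely nontrivial point, as noted, is the vanishing of the flux periods guaranteeing a single valued stream function; everything else is elementary.
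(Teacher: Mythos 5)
Your proof is correct, but it takes a genuinely longer route than the paper's, whose entire proof is one sentence: the fields $\nabla^\perp\psi^j$ lie in $H_c$ and are linearly independent. The paper can stop there because it has already recorded, citing the decomposition \eqref{H} from Temam's appendix, that $\dim H_c=k$; hence $k$ independent members of $H_c$ automatically form a basis. You instead prove spanning directly: given $u\in H_c$, you show every flux period around an island vanishes by applying the divergence theorem in the annular region between a loop $\gamma$ and $\Gamma_i$, using $\div u=0$ together with $u\cdot n|_{\partial\Omega}=0$; consequently $u$ admits a single-valued stream function $\phi$, which is harmonic by \eqref{curlonscalars} and locally constant on $\partial\Omega$, and uniqueness for the Dirichlet problem identifies $\phi$ with $\sum_i a_i\psi^i$. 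This buys self-containedness --- your argument in effect re-derives the fact $\dim H_c=k$ rather than importing it, and the flux-vanishing step is the real content of that fact --- at the cost of length and of a couple of glossed regularity points: since $u$ is only $L^2$ up to the boundary, the flux through $\Gamma_i$ and the constancy of $\phi$ on the boundary components should be read in the weak normal-trace sense (standard, but worth a word). Both routes are legitimate; the paper's is the minimal one given its cited decomposition, yours is the transparent one.
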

\begin{proof} The vector functions  $\nabla^\perp\psi^j\in H_c$ and are linearly independent.
\end{proof}

Next, we consider the Leray projection $P$ from $\mathbf{L}^2(\Omega)$ onto
$\mathcal{H}$. In accordance with~\eqref{H} we have
$P=P_0\oplus P_c$. For the projection $P_0$ onto $H_0$ we have
\begin{equation}\label{P0}
P_0u=\nabla^\perp(\Delta^D_\Omega)^{-1}\curl u,
\end{equation}
where $\Delta^D_\Omega$ is the (scalar) Dirichlet Laplacian, which
is an isomorphism from $H^1_0(\Omega)$ onto $H^{-1}(\Omega)$.

\begin{lemma} \label{L:commut}  
On $\mathcal{H}^2_0$ the projection $P$ commutes with the Laplacian
$\Delta$ with
boundary conditions~\eqref{bcNS}.
\end{lemma}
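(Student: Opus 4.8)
The plan is to reduce the commutation identity $P\Delta u=\Delta Pu$ to the single assertion that $\Delta$ maps $\mathcal{H}^2_0$ back into $\mathcal{H}$. Indeed, by \eqref{H20} we have $\mathcal{H}^2_0\subset\mathcal{H}$, so every $u\in\mathcal{H}^2_0$ already satisfies $Pu=u$, whence $\Delta Pu=\Delta u$. Thus the lemma is equivalent to showing that $\Delta u\in\mathcal{H}$, i.e. that $P$ fixes $\Delta u$ so that $P\Delta u=\Delta u$. In this way the whole matter becomes a question of which of the two boundary conditions in \eqref{bcNS} is inherited by $\Delta u$.

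First I would use the decomposition $u=u_0+u_c$ with $u_0\in H_2^0$ and $u_c\in H_c$. On the $k$-dimensional space $H_c$ of harmonic fields one has $\div u_c=\curl u_c=0$, and hence, by \eqref{VecLap}, $\Delta u_c=\nabla\div u_c-\curl\curl u_c=0$; so this component contributes nothing and it suffices to treat $u_0$. Writing $u_0=\nabla^\perp\varphi$ with $\varphi\in H^3(\Omega)\cap H^1_0(\Omega)$ and setting $\omega:=\curl u_0$, I would invoke \eqref{curlonscalars} to obtain, on the one hand, $\omega=\curl\nabla^\perp\varphi=\Delta\varphi$, and, since $\div u_0=0$,
\[
\Delta u_0=\nabla\div u_0-\curl\curl u_0=-\curl\omega=\nabla^\perp\omega .
\]

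The key step is then to pin down the boundary behaviour of $\omega$. By the definition of $H_2^0$ the stream function satisfies $\Delta\varphi\vert_{\partial\Omega}=0$, which is exactly the second (stress free) condition $\curl u_0\vert_{\partial\Omega}=0$ of \eqref{bcNS}; hence $\omega=\Delta\varphi\in H^1_0(\Omega)$. But $\nabla^\perp$ of an $H^1_0$ scalar is, by the very definition of the space $H_0$ in \eqref{H}, an element of $H_0\subset\mathcal{H}$. Therefore $\Delta u_0=\nabla^\perp\omega\in H_0$, and altogether $\Delta u=\Delta u_0\in\mathcal{H}$, which closes the argument via the reduction of the first paragraph.

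I expect the only genuine obstacle to be the verification that the correct boundary condition survives: it is the identity $\Delta u=\nabla^\perp\curl u$ together with the requirement $\curl u\vert_{\partial\Omega}=0$ that makes $\Delta u$ land back in $H_0$ rather than merely in the larger divergence-free class $\mathcal{H}$. This is precisely the role of the second condition in \eqref{bcNS} (equivalently $\Delta\varphi\vert_{\partial\Omega}=0$); were it dropped, $\nabla^\perp\omega$ would leave $H_0$ and $P$ would act nontrivially on $\Delta u$. All remaining steps are routine and can alternatively be read off from the spectral identity $-\Delta u_j=\lambda_j u_j$ displayed after \eqref{VecLap}: for the $H_0$-component $u_0=\sum_j c_j u_j$, membership in $H_2^0$ means exactly $\sum_j\lambda_j^2 c_j^2<\infty$, so that $\Delta u_0=-\sum_j\lambda_j c_j u_j\in H_0\subset\mathcal{H}$.
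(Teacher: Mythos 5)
Your proof is correct, and it reaches the conclusion by a somewhat different route than the paper. The paper's proof is computational: it uses the explicit representation \eqref{P0} of the projection, $P_0=\nabla^\perp(\Delta^D_\Omega)^{-1}\curl$, and for $u\in H_2^0$ evaluates
$P_0\Delta u=-\nabla^\perp(\Delta^D_\Omega)^{-1}\curl\curl\curl u
=\nabla^\perp(\Delta^D_\Omega)^{-1}\Delta\curl u
=\nabla^\perp\curl u=-\curl\curl u=\Delta u$,
where the cancellation $(\Delta^D_\Omega)^{-1}\Delta\curl u=\curl u$ is legitimate precisely because the stress-free condition puts $\curl u$ in $H^1_0(\Omega)$, the space on which the Dirichlet Laplacian is inverted. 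You never touch the formula \eqref{P0}: instead you use the abstract fact that an orthogonal projection fixes exactly its range, which reduces the commutation identity to the invariance statement $\Delta(\mathcal{H}^2_0)\subset\mathcal{H}$, and you then verify the stronger inclusion $\Delta u=\nabla^\perp\omega\in H_0$ with $\omega=\Delta\varphi\in H^1_0(\Omega)$. Both arguments pivot on exactly the same point --- the boundary condition $\curl u\vert_{\partial\Omega}=0$ is what keeps $\Delta u$ inside the divergence-free class with an $H^1_0$ stream function --- and both dispose of the harmonic component in the same trivial way ($\Delta u_c=0$ in your version, $P_c\Delta=\Delta P_c=0$ in the paper's). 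What your packaging buys is transparency: the mechanism (``$\Delta$ maps $\mathcal{H}^2_0$ into $H_0$, so $P$ has nothing to do'') is stated as a geometric fact rather than hidden inside a string of operator identities, and your closing spectral remark ($\Delta u_0=-\sum_j\lambda_j c_j u_j$, using the paper's Fourier characterization of $H_2^0$) gives a third equivalent confirmation. What the paper's computation buys is that it exercises the formula \eqref{P0} itself, which is introduced just before the lemma and used as the working definition of the projection.
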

\begin{proof}
Since $P_c\Delta=\Delta P_c=0$  on $H_c$, it suffices to consider
$P_0$. Let $u\in {H}^0_2$, see~\eqref{H20}, so that $P_0u=u$. Then
interpreting $\curl u$ as a scalar and using~\eqref{curlonscalars} we obtain
$$
\aligned
P_0\Delta u=-\nabla^\perp(&\Delta^D_\Omega)^{-1}\curl\curl\curl u=
\nabla^\perp(\Delta^D_\Omega)^{-1}\Delta\curl u=\\=
&\nabla^\perp\curl u=-\curl\curl u=\Delta u=\Delta P_0u.
\endaligned
$$
\end{proof}

This lemma makes the subsequent analysis very similar to the 2D periodic case
or the case of a manifold without boundary.

We also recall
the familiar formulas
\begin{equation}\label{fam}
\aligned
&(\nabla\varphi,v)=-(\varphi, \div v),\quad &v\cdot n\vert_{\partial\Omega}=0,\\
&(\curl \varphi,v)=(\varphi, \curl v),\quad &\curl v\vert_{\partial\Omega}=0.
\endaligned
\end{equation}

\begin{lemma} \label{L:orth} \cite{I93}
Let $u\in\mathcal{H}^0_2$ $(see~\eqref{H20})$. Then
\begin{equation}\label{orth}
((u,\nabla)u,\Delta u)=0.
\end{equation}
\end{lemma}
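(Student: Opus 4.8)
The plan is to pass to the vorticity formulation and reduce the claim to the elementary fact that enstrophy is transported conservatively by a divergence-free field that is tangent to the boundary. First I would rewrite $\Delta u$ in terms of the scalar vorticity $\omega:=\curl u$. Since $\div u=0$, the vector Laplacian \eqref{VecLap} gives $\Delta u=-\curl\curl u$, and applying \eqref{curlonscalars} to the scalar $\omega$ yields $\curl\curl u=\curl\omega=-\nabla^\perp\omega$, so that $\Delta u=\nabla^\perp\omega$. Thus the identity to be proved becomes $((u,\nabla)u,\nabla^\perp\omega)=0$.

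Next I would use the pointwise two-dimensional identity $(u,\nabla)u=\tfrac12\nabla|u|^2+\omega\,u^\perp$ and test it against $\nabla^\perp\omega$. The gradient part contributes $(\tfrac12\nabla|u|^2,\nabla^\perp\omega)$; since $\div\nabla^\perp\omega=0$, Green's formula (the divergence-free version of the first identity in \eqref{fam}, retaining the boundary term) turns this into the pure boundary integral $\tfrac12\int_{\partial\Omega}|u|^2\,(\nabla^\perp\omega\cdot n)\,dS$. The remaining part is $(\omega\,u^\perp,\nabla^\perp\omega)=\int_\Omega\omega\,(u\cdot\nabla\omega)\,dx=(\omega,(u,\nabla)\omega)$, using the pointwise identity $u^\perp\cdot\nabla^\perp\omega=u\cdot\nabla\omega$. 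Equivalently, one may avoid the explicit vector identity and instead write $\Delta u=-\curl\omega$ and integrate the curl by parts via the second formula in \eqref{fam}, which applies precisely when $\curl\big((u,\nabla)u\big)\big|_{\partial\Omega}=(u,\nabla)\omega\big|_{\partial\Omega}=0$; this leads to the same reduced quantity $(\omega,(u,\nabla)\omega)$.

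It then remains to show that both contributions vanish, and this is where the boundary conditions \eqref{bcNS} enter, which I expect to be the only delicate point. For the interior term I would write $\omega\,(u\cdot\nabla\omega)=\tfrac12\,u\cdot\nabla(\omega^2)$ and integrate by parts, obtaining $-\tfrac12\int_\Omega(\div u)\,\omega^2\,dx+\tfrac12\int_{\partial\Omega}(u\cdot n)\,\omega^2\,dS$, both of which are zero by $\div u=0$ and $u\cdot n|_{\partial\Omega}=0$. For the boundary integral I would use that $\omega=\curl u$ vanishes on all of $\partial\Omega$ by \eqref{bcNS}: since $\omega\equiv 0$ along the boundary its tangential derivative vanishes, and a direct computation gives $\nabla^\perp\omega\cdot n=-\partial_\tau\omega=0$ there, so the boundary integral disappears (this is also exactly the condition needed for the alternative integration-by-parts route above). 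Assembling these facts gives $((u,\nabla)u,\Delta u)=((u,\nabla)u,\nabla^\perp\omega)=-(\omega,(u,\nabla)\omega)=0$, which is the assertion.
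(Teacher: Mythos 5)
Your proposal is correct and takes essentially the same route as the paper: both rest on the Lamb form $(u,\nabla)u=\tfrac12\nabla|u|^2+\omega\,u^\perp$ together with $\Delta u=\nabla^\perp\omega$ (via \eqref{VecLap} and \eqref{curlonscalars}), both kill the transport part by writing $(\omega u^\perp,\nabla^\perp\omega)=\tfrac12\int_\Omega u\cdot\nabla(\omega^2)\,dx=0$ using $\div u=0$ and $u\cdot n|_{\partial\Omega}=0$, and both kill the gradient part by an integration by parts that exploits $\omega=\curl u=0$ on $\partial\Omega$. The only differences are cosmetic: the paper integrates the gradient term by parts in the opposite direction, $(\nabla|u|^2,\curl\omega)=(\curl\nabla|u|^2,\omega)=0$, so the boundary term carries the trace of $\omega$ itself rather than the normal trace of $\nabla^\perp\omega$ (slightly cleaner at the $H^2$ regularity of $u$, where $\nabla\omega$ is only $L^2$ and its trace needs the $H(\mathrm{div})$ interpretation you implicitly use), and your final line has a harmless sign slip ($+(\omega,(u,\nabla)\omega)$ rather than $-$), immaterial since the quantity vanishes.
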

\begin{proof}
We use the invariant expression for the convective term
$$
(u,\nabla)u=\curl u\times u+\frac12\nabla u^2.
$$
Let $u=u_0+u_c$, where $u_0\in H^0_2$, $u_c\in H_c$. Then,
taking into account~\eqref{VecLap}, for
the second term in the above expression we have
$$
(\nabla u^2,\curl\curl u_0)=(\curl\nabla u^2,\curl u_0)=0,
$$
since $\curl\nabla=0$ algebraically, and the first equality follows
from \eqref{fam} with boundary condition $\curl u_0\vert_{\partial\Omega}=0$

For the first term we have setting $\omega=\curl u_0$
and using~\eqref{curlonscalars}
$$
\aligned
(\curl u_0\times u, \curl\curl u_0)=
-(\omega u^\perp, \nabla^\perp\omega)=\\
-(\omega u,\nabla\omega)=
-\frac12(u,\nabla \omega^2)=\frac12(\div u,\omega^2)=0,
\endaligned
$$
where we used $u\cdot n\vert_{\partial\Omega}=0$ for the
integration by parts.
\end{proof}

We also recall the familiar orthogonality relation
\begin{equation}\label{b}
b(u,v,v)=0,
\end{equation}
where the trilinear form $b$
$$
b(u,v,w)=\int_\Omega\sum_{i,j=1}^2u^i\partial_i v^jw^jdx
$$
is continuous on $\mathcal{H}^1$.

The space $H_c$ of (infinitely smooth) harmonic vector functions
is $k$-dimensional, and every Sobolev norm $\mathbf{H}^k(\Omega)$
is equivalent to the $\mathbf{L}^2(\Omega)$-norm.
Therefore the $\mathbf{H}^1(\Omega)$-norm on $\mathcal{H}^1$
for $u=u_0+u_c\in H_1\oplus H_c$
can be given by
$$
\|u\|_1^2:=\|u\|^2+\|\curl u\|^2=\|u\|^2+\|\curl u_0\|^2.
$$
Accordingly,  the $\mathbf{H}^2(\Omega)$-norm on $\mathcal{H}^2_0$
is given by
$$
\|u\|_2^2:=\|u\|^2+\|\curl\curl u\|^2=\|u\|^2+\|\curl\curl u_0\|^2.
$$

\begin{theorem} \label{T:NSEE}
Let the initial data $u^0$  and the right-hand side $g$
in the damped Navier--Stokes system~\eqref{NS}, \eqref{bcNS}
satisfy
$$
u^0\in\mathcal{H}^1,\qquad g\in \mathcal{H}^1.
$$
Then there exists a unique strong solution
$u\in C([0,T];\mathcal{H}^1)\cap L^2(0,T;\mathcal{H}^2_0)$
of~\eqref{NS}, \eqref{bcNS}.
Thus, a semigroup of solution operators
$$
u(t)=S(t)u(0),
$$
corresponding to
\eqref{NS}, \eqref{bcNS} is well defined.

The solution satisfies
the equation of balance of energy and enstrophy:
\begin{equation}\label{NSee}
\frac12\frac{d}{dt}\|u\|_1^2+\nu\|\Delta u\|^2+r\|u\|_1^2=(g,u)_1,
\end{equation}
where
$$
(g,u)_1:=(g,u)+(\curl g,\curl u).
$$
\end{theorem}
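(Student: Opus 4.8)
The plan is to build the solution by a Galerkin method in the eigenbasis $\{u_j\}$ of the vector Laplacian, establish uniform energy and enstrophy bounds, pass to the limit, and then prove uniqueness together with the balance law. First I would apply the Leray projection $P$ to \eqref{NS}; by Lemma~\ref{L:commut} we have $P\Delta u=\Delta u$ on $\mathcal{H}^2_0$, and since $g\in\mathcal{H}^1$ gives $Pg=g$, the pressure is eliminated and the system becomes
$$
\partial_t u+P\big[(u,\nabla)u\big]+ru=\nu\Delta u+g,\qquad u(0)=u^0 .
$$
Looking for $u_m=\sum_{j=1}^{m}c_j(t)u_j$ in $H_m:=\mathrm{span}\{u_1,\dots,u_m\}\subset\mathcal{H}^2_0$ and projecting orthogonally onto $H_m$ turns this into a system of ODEs for the coefficients $c_j$ with quadratic, hence locally Lipschitz, nonlinearity; the Picard existence theorem gives a local-in-time solution $u_m$.

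The a priori estimates are the heart of the matter. Testing the Galerkin equation against $u_m$ and using $b(u_m,u_m,u_m)=0$ from \eqref{b} produces the energy balance, with dissipation $\nu\|\curl u_m\|^2$. Testing against $-\Delta u_m=\curl\curl u_m\in H_m$ is where the stress-free structure is decisive: because $u_m\in\mathcal{H}^2_0$, Lemma~\ref{L:orth} gives $((u_m,\nabla)u_m,\Delta u_m)=0$, so the inertial term drops out, and \eqref{fam} (with $\curl u_m\vert_{\partial\Omega}=0$) lets me rewrite $(g,-\Delta u_m)=(\curl g,\curl u_m)$. This yields the enstrophy balance
$$
\tfrac12\tfrac{d}{dt}\|\curl u_m\|^2+\nu\|\Delta u_m\|^2+r\|\curl u_m\|^2=(\curl g,\curl u_m).
$$
Combining the two identities and applying Young's and Gronwall's inequalities gives bounds on $u_m$ in $L^\infty(0,T;\mathcal{H}^1)\cap L^2(0,T;\mathcal{H}^2_0)$ that are uniform in $m$; in particular the ODE solution cannot blow up, so $u_m$ is global.

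Next I would pass to the limit $m\to\infty$. In two dimensions the uniform bounds make $(u_m,\nabla)u_m$ bounded in $L^2(0,T;\mathcal{H})$ (using $\mathcal{H}^2_0\hookrightarrow\mathbf{L}^\infty$ together with $\mathcal{H}^1\hookrightarrow\mathbf{L}^2$), whence $\partial_t u_m$ is bounded in $L^2(0,T;\mathcal{H})$. Since the embedding $\mathcal{H}^2_0\Subset\mathcal{H}^1$ is compact and $\mathcal{H}^1\subset\mathcal{H}$, the Aubin--Lions lemma yields a subsequence converging strongly in $L^2(0,T;\mathcal{H}^1)$, weakly in $L^2(0,T;\mathcal{H}^2_0)$ and weak-$\ast$ in $L^\infty(0,T;\mathcal{H}^1)$; the strong $\mathcal{H}^1$ convergence is exactly what is needed to pass to the limit in the quadratic term. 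The limit $u$ solves the equation and lies in $L^\infty(0,T;\mathcal{H}^1)\cap L^2(0,T;\mathcal{H}^2_0)$ with $\partial_t u\in L^2(0,T;\mathcal{H})$, and the Lions--Magenes interpolation lemma upgrades this to $u\in C([0,T];\mathcal{H}^1)$.

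Finally, for uniqueness I would take two solutions $u,v$, set $w=u-v$, test the equation for $w$ against $w$, and control the nonlinear difference by the 2D Ladyzhenskaya inequality and the $L^2(0,T;\mathcal{H}^2_0)$-regularity of the solutions; Gronwall then forces $w\equiv 0$, and the same estimate gives continuous dependence on $u^0$, so $S(t)$ is a well-defined semigroup. Because the limit already enjoys $u\in L^2(0,T;\mathcal{H}^2_0)$ and $\partial_t u\in L^2(0,T;\mathcal{H})$, every product in the energy and enstrophy identities is genuinely integrable, so testing the limiting equation against $u$ and against $-\Delta u$ (again annihilating the inertial term via \eqref{b} and Lemma~\ref{L:orth}) is fully justified and, on combination, gives \eqref{NSee} as a true equality rather than an inequality. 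The step I expect to be the main obstacle is precisely this justification of the enstrophy estimate and of the balance as an equality: it depends on keeping both $u_m$ and the limit inside $\mathcal{H}^2_0$, so that Lemma~\ref{L:orth} applies and no boundary contributions survive, which in turn rests on the commutation property of Lemma~\ref{L:commut} and the stress-free conditions \eqref{bcNS}.
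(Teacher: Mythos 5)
Your strategy coincides with the paper's: Galerkin approximation in the special eigenbasis, the energy identity via \eqref{b}, the enstrophy identity via Lemma~\ref{L:orth} and \eqref{fam}, Gronwall, and then the standard 2D machinery for the limit, uniqueness and the balance law (the paper compresses this last part into a citation of \cite{BV}, \cite{TemNS}). However, there is one genuine gap, and it sits exactly at the point where this setting differs from the torus or a simply connected domain: your Galerkin spaces $H_m=\mathrm{span}\{u_1,\dots,u_m\}$ built from the eigenfunctions \eqref{sp_basis} are \emph{not} dense in $\mathcal{H}^1$ or $\mathcal{H}^2_0$ when $\Omega$ is multiply connected. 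The closures of $\mathrm{span}\{u_j\}_{j\ge1}$ in the relevant topologies are $H_0$, $H_1$ and $H_2^0$, all of which miss the $k$-dimensional space $H_c$ of harmonic vector fields appearing in the decompositions \eqref{H} and \eqref{H20}.

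The consequences are real, not cosmetic: (i) data $u^0\in\mathcal{H}^1$ and $g\in\mathcal{H}^1$ with nonzero $H_c$-components cannot be approximated inside your $H_m$; (ii) even if $u^0,g\in H_1$, the function you obtain in the limit satisfies the integral identity only for test functions $v$ in the closure of $\bigcup_m H_m$, i.e.\ for $v\in H_1$, so the component of the dynamics along $H_c$ --- driven by $P_c[(u,\nabla)u]$ and $P_cg$, which are in general nonzero --- is simply lost, and the limit solves a projected system rather than \eqref{NS}. The fix is precisely what the paper does: supplement the eigenbasis with a basis of the finite-dimensional space $H_c$, for instance the one from Lemma~\ref{L:basis}, placing these $k$ vectors first in the enumeration so that every Galerkin space contains all of $H_c$. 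With this modification your estimates survive verbatim: Lemma~\ref{L:orth} is stated for all of $\mathcal{H}^2_0=H_2^0\oplus H_c$, the harmonic part is annihilated by $\Delta$ in the enstrophy test, and its $L^2$-norm is controlled by the energy identity (all norms on the finite-dimensional $H_c$ being equivalent). The remainder of your argument --- Aubin--Lions compactness, Lions--Magenes continuity in $\mathcal{H}^1$, Ladyzhenskaya-based uniqueness, and the justification of \eqref{NSee} as a genuine equality by testing the limiting equation with $u$ and $-\Delta u$ --- is correct and supplies details that the paper delegates to the classical references.
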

\begin{proof} The proof is standard and uses the Galerkin method.
We use the special basis~\eqref{sp_basis} in $\mathcal{H}^2_0\subset\mathcal{H}^1$
and supplement it with a $k$-dimensional basis in $H_c$,
for example, with the one from Lemma~\ref{L:basis} starting the enumeration
from the basis in $H_c$.

Then for every approximate Galerkin solution
$$
u=u^{(n)}=\sum_{k=1}^nc_ku_k\in \mathcal{H}^2_0
$$
we have the orthogonality relations~\eqref{orth}, \eqref{b}.
We take the scalar product of~\eqref{NS} with $u$,
 and also
 with $\Delta u$, integrate by parts using~\eqref{fam},
 drop  the $\nu$-terms
and  use  Growwall's inequality to
  obtain in the standard way the   estimates
  $$
  \aligned
  &\|u(t)\|^2\le\|u(0)\|e^{-rt}+r^{-2}\|g\|^2,\\
  &\|\curl u(t)\|^2\le\|\curl u(0)\|e^{-rt}+r^{-2}\|\curl g\|^2,
  \endaligned
  $$
  which gives
\begin{equation}\label{apr}
\|u(t)\|_1^2\le\|u(0)\|_1e^{-rt}+r^{-2}\|g\|_1^2
\end{equation}
for $u=u^{(n)}$, uniformly for $n$ and $\nu>0$. The remaining assertions
of the theorem are proved very similarly to the classical case
of the 2D Navier--Stokes system with Dirichlet boundary
conditions (even simpler, since we now  have
more regularity, see, for instance, \cite{BV},\cite{TemNS}).
\end{proof}

We recall the following definition of the (strong) global
attractor (see, for instance \cite{BV},\cite{Tem}).
\begin{definition}\label{D:attr} Let $S(t)$, $t\ge0$, be a semigroup acting in a Banach space $\mathcal B$.
Then the set $\mathscr A\subset\mathcal B$ is a global attractor of  $S(t)$ if
\par
1)  $\mathscr A$ is compact in $\mathcal B$: $\mathscr A\Subset\mathcal B$.
\par
2) $\mathscr A$ is strictly invariant: $S(t)\mathscr A=\mathscr A$.
\par
3) $\mathscr A$ is globally attracting, that is,
$$
\lim_{t\to\infty}\mathrm{dist}(S(t)B,\mathscr A)=0,
\quad\text{for every bounded set $B\subset\mathcal{B}$}.
$$
\end{definition}

\begin{theorem}\label{T:AttrNS} The semigroup $S(t)$ corresponding to
\eqref{NS}, \eqref{bcNS} has a global attractor $\mathscr{A}\Subset\mathcal{H}^1$.
\end{theorem}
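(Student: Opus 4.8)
The plan is to establish the three standard ingredients which, by the classical theory of global attractors for semigroups (see \cite{BV}, \cite{Tem}), suffice for the existence of a compact global attractor: the solution map $S(t)$ is continuous on $\mathcal{H}^1$, it possesses a bounded absorbing set in $\mathcal{H}^1$, and it is asymptotically compact. Since here $\nu>0$ is fixed, the viscous term provides parabolic smoothing, so I would obtain asymptotic compactness in its strongest form, by producing a bounded absorbing set in $\mathcal{H}^2_0$ and invoking the compact Rellich embedding $\mathcal{H}^2_0\Subset\mathcal{H}^1$.

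The continuity and the absorbing set are the routine parts. Well-posedness and the inclusion $u\in C([0,T];\mathcal{H}^1)$ are already supplied by Theorem~\ref{T:NSEE}, so $S(t)$ is well defined; to check continuity in the initial data I would write the equation for the difference $w=u_1-u_2$ of two solutions, test it in the $\mathcal{H}^1$ inner product, and control the nonlinearity using \eqref{b}, the continuity of $b$ on $\mathcal{H}^1$, and Ladyzhenskaya's inequality, which yields a Gronwall bound $\|w(t)\|_1\le e^{Ct}\|w(0)\|_1$ on every finite interval. For the absorbing set I would start from the energy--enstrophy balance \eqref{NSee}, estimate $(g,u)_1\le\tfrac r2\|u\|_1^2+\tfrac1{2r}\|g\|_1^2$, drop the nonnegative term $\nu\|\Delta u\|^2$, and arrive at $\frac{d}{dt}\|u\|_1^2+r\|u\|_1^2\le r^{-1}\|g\|_1^2$; Gronwall's lemma then reproduces the dissipative estimate \eqref{apr} and shows that any $\mathcal{H}^1$-ball of radius exceeding $r^{-1}\|g\|_1$ is absorbing. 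Fix such a ball $\mathcal{B}_0$.

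The hard part is the asymptotic compactness, which amounts to upgrading the integrated enstrophy dissipation into a pointwise $\mathcal{H}^2_0$ bound. Integrating \eqref{NSee} over a unit interval and using the $\mathcal{H}^1$ bound on $\mathcal{B}_0$ gives $\nu\int_t^{t+1}\|\Delta u(s)\|^2\,ds\le C$ for $t$ large; recalling that on $H^0_2$ one has $\|\Delta u\|=\|\curl\curl u\|=\|\nabla\omega\|$ with $\omega=\curl u$, this says $\int_t^{t+1}\|\nabla\omega(s)\|^2\,ds\le C/\nu$. Next I would use that in 2D the vorticity obeys the scalar equation $\partial_t\omega+(u,\nabla)\omega+r\omega=\nu\Delta\omega+\curl g$ with $\omega|_{\partial\Omega}=0$ by \eqref{bcNS}, test it with $-\Delta\omega$, and observe that the boundary terms vanish thanks to $\omega|_{\partial\Omega}=0$. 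The convective term $\int(u,\nabla)\omega\,\Delta\omega$ is the only delicate one: estimating it by H\"older and the Ladyzhenskaya inequalities, $|\int(u,\nabla)\omega\,\Delta\omega|\le\|u\|_{L^4}\|\nabla\omega\|_{L^4}\|\Delta\omega\|\le C\|\nabla\omega\|^{1/2}\|\Delta\omega\|^{3/2}$, and then absorbing the top-order factor into $\nu\|\Delta\omega\|^2$ by Young's inequality, leads to a differential inequality of the form $\frac{d}{dt}\|\nabla\omega\|^2\le C_\nu\|\nabla\omega\|^2+C_\nu$. Since $\|\nabla\omega\|^2$ is integrable over unit intervals by the previous step, the uniform Gronwall lemma produces a bound $\|u(t)\|_2^2\le\rho^2$ for all $t\ge1$ and all $u(0)\in\mathcal{B}_0$, with $\rho=\rho(\nu)$ (blowing up as $\nu\to0^+$, which is harmless for fixed $\nu$).

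Thus $S(t)\mathcal{B}_0$ lies in a fixed bounded set of $\mathcal{H}^2_0$ for $t\ge1$, and the compact embedding $\mathcal{H}^2_0\Subset\mathcal{H}^1$ yields the asymptotic compactness. The three ingredients combine to give the global attractor $\mathscr{A}=\bigcap_{s\ge0}\overline{\bigcup_{t\ge s}S(t)\mathcal{B}_0}\Subset\mathcal{H}^1$ in the sense of Definition~\ref{D:attr}. I expect the nonlinear estimate for the vorticity equation, together with the verification that no boundary contributions survive the integrations by parts, to be the only genuinely non-automatic step.
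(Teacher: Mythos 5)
Your proposal is correct and follows essentially the same route as the paper: dissipative estimate giving an absorbing ball in $\mathcal{H}^1$, continuity of $S(t)$, and the parabolic smoothing property yielding a compact absorbing set (the paper takes $B_1=S(1)B_0$ and cites the classical 2D Navier--Stokes theory for the smoothing and continuity, exactly the steps you carry out explicitly via the Dirichlet vorticity equation, Ladyzhenskaya estimates, and the uniform Gronwall lemma). The only difference is one of detail, not of method.
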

\begin{proof}
It follows from~\eqref{apr} that the ball
\begin{equation}\label{ball}
B_0=\{u\in\mathcal{H}^1,\ \ \|u\|_1^2\le2r^{-2}\|g\|_1^2\}
\end{equation}
is the absorbing ball for $S(t)$.  The semigroup $S(t)$ is continuous in
$\mathcal{H}^1$ and  has the smoothing property
(which can be shown similarly to  the classical 2D Navier--Stokes system
\cite{BV}, \cite{Tem}). Therefore the set
$$
B_1=S(1)B_0
$$
is a compact absorbing set, which gives the existence
of the attractor $\mathscr{A}\Subset\mathcal{H}^1$. We finally point out
that for $u(t)\in\mathscr A$ we have for all $t\in \mathbb{R}$
\begin{equation}\label{estonA}
\|u(t)\|_1\le r^{-1}\|g\|_1
\end{equation}
uniformly with respect to $\nu>0$.
\end{proof}

\section{Weak solutions for the  Euler system and energy-enstrophy balance}
\label{Sec2}

We now turn to the damped and driven Euler system~\eqref{E}, \eqref{bcE}.
\begin{definition}\label{D:weakE} Let $u(0),g\in\mathcal{H}^1$. A vector function
 $u=u(t,x)$ is called a weak solution of~\eqref{E}, \eqref{bcE} if
 $u\in L^\infty(0,T;\mathcal{H}^1)$ and satisfies the integral identity
 \begin{equation}\label{identE}
\aligned
-\int_0^T(u,v\eta'(t))dt+&\int_0^Tb(u,u,v\eta(t))dt+\\
+r&\int_0^T(u,v\eta(t))dt=\int_0^T(g,v\eta(t))dt
\endaligned
 \end{equation}
 for all $\eta\in C^\infty_0(0,T)$ and all $v\in\mathcal{H}^1$.
\end{definition}

\begin{theorem}\label{T:WeakE}
There exists at least one solution of the damped Euler
system~\eqref{E}, \eqref{bcE}. Moreover, every
weak solution in the sense of Definition~\ref{D:weakE}
is of class $C([0,T]; \mathcal{H})$ and satisfies
the equation of balance of energy
\begin{equation}\label{balEnergy}
\frac12\frac d{dt}\|u(t)\|^2+r\|u(t)\|^2=(g,u(t)).
\end{equation}
\end{theorem}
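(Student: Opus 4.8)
The plan is to prove Theorem~\ref{T:WeakE} in two stages: first establish existence of a weak solution, then show that every weak solution is continuous into $\mathcal{H}$ and satisfies the energy balance~\eqref{balEnergy}. For existence, I would pass to the vanishing viscosity limit in the Navier--Stokes system~\eqref{NS}. By Theorem~\ref{T:NSEE} we have strong solutions $u_\nu$ satisfying the energy-enstrophy balance~\eqref{NSee}, and from the a priori estimate~\eqref{apr} these are bounded in $L^\infty(0,T;\mathcal{H}^1)$ \emph{uniformly} in $\nu>0$. Using the equation we also bound $\partial_t u_\nu$ in, say, $L^2(0,T;(\mathcal{H}^1)^*)$ or a similar negative space, so that by the Aubin--Lions--Simon compactness lemma we may extract a subsequence converging strongly in $L^2(0,T;\mathcal{H})$ and weak-$*$ in $L^\infty(0,T;\mathcal{H}^1)$ to some limit $u$. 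The strong $L^2$-in-space convergence is exactly what is needed to pass to the limit in the nonlinear term $b(u_\nu,u_\nu,v\eta)$, and the linear terms pass trivially; the viscous term $\nu(\Delta u_\nu,\cdot)$ vanishes since $\nu\|\Delta u_\nu\|$ stays controlled. Hence $u$ satisfies the integral identity~\eqref{identE} and lies in $L^\infty(0,T;\mathcal{H}^1)$, giving a weak solution.

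For the second assertion I would first upgrade the time regularity of an arbitrary weak solution. Since $u\in L^\infty(0,T;\mathcal{H}^1)$, the trilinear form $b(u,u,\cdot)$ is bounded in the dual of $\mathcal{H}^1$, and reading off~\eqref{identE} shows $\partial_t u\in L^\infty(0,T;(\mathcal{H}^1)^*)$ (or at least $L^2$ in time into a negative space). The standard interpolation lemma (Lions--Magenes) then yields $u\in C([0,T];\mathcal{H})$ after possible modification on a null set, so the initial and terminal values are well defined in $\mathcal{H}$. To obtain the energy equality~\eqref{balEnergy} I would like to test the equation with $u$ itself, but $u$ is not an admissible test function directly since $\partial_t u$ lives only in a negative space; the clean way is to invoke the duality pairing $\langle\partial_t u,u\rangle=\tfrac12\frac{d}{dt}\|u\|^2$, valid precisely because $u\in L^\infty(0,T;\mathcal{H}^1)\subset L^2(0,T;\mathcal{H}^1)$ and $\partial_t u\in L^2(0,T;(\mathcal{H}^1)^*)$, combined with the antisymmetry $b(u,u,u)=0$ from~\eqref{b}.

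The main obstacle is justifying $b(u,u,u)=0$ and the Lions--Magenes chain-rule identity at the level of regularity actually available, since $u\in\mathcal{H}^1$ in 2D is the borderline case where the convective term must be handled with care. The relation~\eqref{b} holds for smooth fields, but to apply it to the weak solution one needs the pairing $\langle\partial_t u,u\rangle$ to genuinely reproduce $\tfrac12\frac{d}{dt}\|u\|^2$ and the nonlinear term to integrate to zero without leaving a defect. In two dimensions, with $u$ bounded in $\mathcal{H}^1$, the form $b(u,u,v)$ is continuous on $\mathcal{H}^1$ by the estimate mentioned after~\eqref{b}, and this continuity is exactly the quantitative input that makes $b(u,u,u)$ meaningful and equal to zero by density of smooth fields together with the dominated convergence afforded by the uniform $\mathcal{H}^1$ bound. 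I would therefore carry out the argument by regularizing in time (e.g.\ mollifying $u$ in $t$), applying the smooth identity, and passing to the limit, with the continuity of $b$ on $\mathcal{H}^1$ guaranteeing that all terms converge. It is worth noting that this energy balance is for the $\mathcal{H}$-norm only; the more delicate enstrophy balance, which requires the renormalized-solution theory of~\cite{DiP-Lio} alluded to in the introduction, is a separate and harder matter not needed for this particular statement.
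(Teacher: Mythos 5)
Your proposal is correct, but the existence half takes a genuinely different route from the paper. The paper proves existence by the Galerkin method: it uses the special basis~\eqref{sp_basis} (supplemented by a basis of $H_c$), notes that the approximations satisfy the uniform estimate~\eqref{apr}, bounds $\partial_t u^{n}$ in $L^2(0,T;\mathcal{H}^{-1})$, and extracts a subsequence converging $*$-weakly in $L^\infty(0,T;\mathcal{H}^1)$ and strongly in $L^2(0,T;\mathcal{H})$, which suffices to pass to the limit in the nonlinear term. You instead obtain the approximating family from the already-proved Theorem~\ref{T:NSEE}, i.e.\ the Navier--Stokes solutions $u_\nu$, and let $\nu\to0^+$; since~\eqref{apr} is uniform in $\nu$, the compactness argument is then word-for-word the same. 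Both routes are sound. The Galerkin route is self-contained and does not lean on the viscous theory; your route buys something thematically valuable for this particular paper: it shows at the outset that vanishing-viscosity limits of Navier--Stokes solutions are weak Euler solutions, which is precisely the kind of limit passage performed again in Section~\ref{Sec4} (Theorem~\ref{T:strong_upper_cont}). One small imprecision to tighten: your claim that ``$\nu\|\Delta u_\nu\|$ stays controlled'' should be made norm-specific -- either estimate $\|\nu\Delta u_\nu\|_{\mathcal{H}^{-1}}\le\nu\|u_\nu\|_1\le \nu r^{-1}\|g\|_1$, which suffices for the uniform bound on $\partial_t u_\nu$ and makes the viscous term vanish against fixed $v\in\mathcal{H}^1$ via $\nu(\curl u_\nu,\curl v)$, or use $\nu\int_0^T\|\Delta u_\nu\|^2dt\le C$ from~\eqref{NSee} to get $\nu\|\Delta u_\nu\|_{L^2(0,T;L^2)}\le C\sqrt{\nu}\to0$. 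For the second half of the statement (continuity into $\mathcal{H}$ and the energy balance~\eqref{balEnergy}) your argument coincides with the paper's: both observe that $u\in L^\infty(0,T;\mathcal{H}^1)$ forces $\partial_t u$ into a negative space via the equation and then invoke the Lions--Magenes/Temam lemma to justify pairing the equation with $u$ itself; your explicit discussion of the time-mollification underlying $\langle\partial_t u,u\rangle=\tfrac12\tfrac{d}{dt}\|u\|^2$ and of $b(u,u,u)=0$ by continuity of $b$ on $\mathcal{H}^1$ and density simply spells out what the paper's citation of~\cite{TemNS} compresses. You are also right that the enstrophy balance is a separate, harder matter (it is Theorem~\ref{T:en-ens}, proved via the renormalization theory of~\cite{DiP-Lio}) and is not needed here.
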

\begin{proof} As before we use the special basis
and see that approximate Galerkin solutions $u^{n}$ satisfy~\eqref{apr}
and therefore we obtain that uniformly with respect to $n$
$$
u^{n}\in L^\infty(0,T;\mathcal{H}^1).
$$
Next, we see from  equation~\eqref{E} that
$\partial_tu^{n}$ is bounded in $L^2(0,T;\mathcal{H}^{-1})$.
Therefore we can extract a subsequence (still denoted by $u^{n}$)
such that
$$
u^{n}\to u\ \text{$*$-weakly in}\ L^\infty(0,T;\mathcal{H}^1)\ \text{and strongly in}\
L^2(0,T;\mathcal{H}).
$$
This is enough to pass to the limit in the non-linear term in
\eqref{identE} and therefore to verify that $u$
satisfies~\eqref{identE}. Since $\partial_tu\in L^2(0,T;\mathcal{H}^{-1})$,
it follows that we can take the scalar product of~\eqref{E}
with the solution $u$ to obtain~\eqref{balEnergy}, see~\cite{TemNS}.
\end{proof}

We now derive the scalar  equation for $\omega=\curl u$.
We set in~\eqref{identE}
$$
v=\curl\varphi,\qquad \varphi\in C_0^\infty(\Omega)
$$
and integrate by parts the linear terms in~\eqref{identE}
by using the second formula in~\eqref{fam}. For the non-linear term we have
\begin{equation}\label{eqvort}
\aligned
b(u,u,v)=\int_\Omega(u,\nabla)u\cdot\curl\varphi dx=
\int_\Omega(\curl u\times u)\cdot\curl\varphi dx=\\
\int_\Omega(\omega u^\perp)\cdot\curl\varphi dx=
\int_\Omega\curl(\omega u^\perp)\varphi dx=
\int_\Omega u\nabla \omega \varphi dx,
\endaligned
\end{equation}
since algebraically $\curl(\omega u^\perp)=\omega\div u+u\nabla \omega$.

Thus, we have shown that $\omega =\curl u$ satisfies in $\Omega$
the following equation (in the sense of distributions)
\begin{equation}\label{omegainOmega}
\aligned
\partial_t\omega+u\nabla\omega+r\omega=G,\\
\omega(0)=\omega^0.
\endaligned
\end{equation}
where $G=\curl g$, $\omega^0=\curl u(0)$.

We observe that  we can integrate by parts the last term in~\eqref{eqvort}
 another time using
the boundary condition for $u$ only: $u\cdot n\vert_{\partial\Omega}=0$.
Namely, for every $\varphi\in C^\infty(\bar\Omega)$ it holds
$$
\int_\Omega u\nabla\omega\,\varphi dx=-\int_\Omega \omega \div(u\varphi) dx,
$$
where $\varphi$ does not necessarily vanish at $\partial\Omega$,
we use $u\cdot n\vert_{\partial\Omega}=0$ instead.

The above argument shows that if $u$ is a weak solution
of the Euler system~\eqref{E}, then $\omega=\curl u$ satisfies
the following integral identity:
\begin{equation}\label{identomega}
\aligned
-\int_0^T\int_\Omega\omega\varphi\eta'(t)dxdt-&\int_0^T\int_\Omega\omega\div(u\varphi)\eta(t)dxdt+\\
+r&\int_0^T\int_\Omega\omega\varphi\eta(t)dxdt=\int_0^T\int_\Omega G\varphi\eta(t)dxdt,
\endaligned
\end{equation}
holding for all $\varphi\in C^\infty(\bar\Omega)$.

We now extend $\omega$ by zero outside $\Omega$  setting for all $t$
$$
\widetilde\omega=\left\{
               \begin{array}{ll}
                 \omega, & \hbox{in $\Omega$;} \\
                 0, & \hbox{in $\Omega^c=\mathbb{R}^2\setminus\Omega$.}
               \end{array}
             \right.
$$
In the similar way by define $\widetilde G$. The vector
function $u$ is extended
to a $\widetilde u\in \mathbf{H}^1(\mathbb{R}^2)$ in a
certain way that will be specified later. Since $\varphi$
in~\eqref{identomega} is an arbitrary smooth function
in~$C^\infty(\bar\Omega)$, it follows that
the following integral identity holds in the whole $\mathbb{R}^2$
\begin{equation}\label{identomegaR2}
\aligned
-\int_0^T\int_{\mathbb{R}^2}\widetilde\omega\varphi\eta'(t)dxdt-
&\int_0^T\int_{\mathbb{R}^2}\widetilde\omega\div(\widetilde u\varphi)\eta(t)dxdt+\\
+r&\int_0^T\int_{\mathbb{R}^2}\widetilde\omega\varphi\eta(t)dxdt=
\int_0^T\int_{\mathbb{R}^2} \widetilde G\varphi\eta(t)dxdt,
\endaligned
\end{equation}
holding for all $\varphi\in C^\infty_0(\mathbb{R}^2)$, $\eta\in C^\infty_0(0,T)$.

In other words, we have shown that $\widetilde \omega$ is a weak solution
in the whole $\mathbb{R}^2$ of  the equation
\begin{equation}\label{omegainR2}
\aligned
\partial_t\widetilde\omega+\widetilde u\nabla\widetilde \omega+r\widetilde\omega=\widetilde G,\\
\widetilde\omega(0)=\widetilde\omega^0.
\endaligned
\end{equation}

We shall now specify the construction of $\widetilde u$. Recall that
$$
u=u_0\oplus u_c, \quad u_0\in H_1, \ u_c\in H_c\,,
$$
where $u\in L^\infty(0,T;\mathcal{H}^1)$, and where
 $u_0$ has a single valued stream function~$\psi_0$:
$ u_0=\nabla^\perp\psi_0$, $\psi_0\in H^2(\Omega)$
(we do not use the additional information that $\psi_0=0$ at $\Gamma$).
In view of Lemma~\ref{L:basis}, so does $u_c$:
$u_c=\nabla^\perp\psi_c$, where $\psi_c\in H^2(\Omega)$ (at least).
We set $\psi=\psi_0+\psi_c$ and apply the extension operator $E$: $\widetilde \psi=E\psi$,
$$
\widetilde\psi\in H^2(\mathbb{R}^2),\quad
\|\widetilde\psi\|_{H^2(\mathbb{R}^2)}\le c(\Omega)
\|\psi\|_{H^2(\Omega)}.
$$
Then $\widetilde u:=\nabla^\perp\widetilde\psi$ is the required
extension of the vector function $u$ with
$$
\|\widetilde u\|_{\mathbf{H}^1(\mathbb{R}^2)}\le
c(\Omega)\|u\|_{{H}^1}, \qquad \div \widetilde u=0 \ \  \text{in the whole $\ \mathbb{R}^2$}.
$$

We are now in a position to apply the theory developed
in~\cite{DiP-Lio}. In particular, it follows
from~\cite[Theorem~II.3]{DiP-Lio} that the weak solution
$\widetilde \omega$ of~\eqref{omegainR2} in the sense~\eqref{identomegaR2}
is a renormalized solution, that is, satisfies
$$
\partial_t\beta(\widetilde\omega)+
\widetilde u \nabla\beta(\widetilde\omega)+
r\widetilde\omega\beta'(\widetilde\omega)=\beta'(\widetilde\omega)\widetilde G
$$
for all $\beta \in C^1_b(\mathbb{R})$ with $\beta(0)=0$.
This gives that
$$
\frac d{dt}\int_{\mathbb{R}^2}\beta(\widetilde\omega)dx+
r\int_{\mathbb{R}^2}\widetilde\omega\beta'(\widetilde\omega)dx
=\int_{\mathbb{R}^2}\widetilde G\beta'(\widetilde\omega)dx.
$$
Since $\beta(0)=0$ and $\widetilde \omega=0$ outside $\Omega$,
 the last equation goes over to
$$
\frac d{dt}\int_{\Omega}\beta(\omega)dx+
r\int_{\Omega}\omega\beta'(\omega)dx
=\int_{\Omega}G\beta'(\omega)dx.
$$
Choosing now for $\beta$ appropriate approximations of the function $s\to s^2$
we finally obtain
\begin{equation}\label{DPL31}
\frac12\frac d{dt}\|\omega(t)\|^2+
r\|\omega(t)\|^2=(\omega(t),G).
\end{equation}
Thus, we have proved the following result.
\begin{theorem}\label{T:en-ens}
Every weak solution of the damped and driven Euler equation
is of class $C([0,T];\mathcal{H}^1)$ and satisfies the equation of balance
of energy and enstrophy
\begin{equation}\label{enensbal}
\frac12\frac d{dt}\|u\|^2_1+
r\|u\|^2_1=(u,g)_1.
\end{equation}
\end{theorem}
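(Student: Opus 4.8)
The plan is to read \eqref{enensbal} off as the sum of two scalar balances that are already at our disposal, and then to devote the remaining effort to the one genuinely new claim, namely the \emph{strong} continuity $u\in C([0,T];\mathcal{H}^1)$. The energy balance from Theorem~\ref{T:WeakE} reads
$$
\frac12\frac{d}{dt}\|u\|^2+r\|u\|^2=(g,u),
$$
while the renormalization argument culminating in \eqref{DPL31} supplies the enstrophy balance
$$
\frac12\frac{d}{dt}\|\omega\|^2+r\|\omega\|^2=(\omega,G),\qquad \omega:=\curl u,\ \ G:=\curl g.
$$
Since $\|u\|_1^2=\|u\|^2+\|\curl u\|^2$ and $(u,g)_1=(g,u)+(\curl g,\curl u)$, adding these two identities reproduces \eqref{enensbal} verbatim. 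As Theorem~\ref{T:WeakE} only furnishes $u\in C([0,T];\mathcal{H})$ together with $u\in L^\infty(0,T;\mathcal{H}^1)$, the sole point still requiring an argument is the continuity into $\mathcal{H}^1$.

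For this I would work at the level of the vorticity $\omega$ and invoke the classical principle that in the Hilbert space $L^2(\Omega)$ weak continuity together with continuity of the norm implies strong continuity. Continuity of $t\mapsto\|\omega(t)\|^2$ is immediate from \eqref{DPL31}, whose right-hand side $-2r\|\omega\|^2+2(\omega,G)$ lies in $L^\infty(0,T)\subset L^1(0,T)$ because $\omega\in L^\infty(0,T;L^2)$ and $G$ is a fixed $L^2$-function; hence $\|\omega(\cdot)\|^2\in W^{1,1}(0,T)\hookrightarrow C([0,T])$. Weak $L^2$-continuity, $\omega\in C_w([0,T];L^2)$, I would extract from the transport equation $\partial_t\omega=-\div(\omega u)-r\omega+G$: in two dimensions $u\in L^\infty(0,T;H^1)\hookrightarrow L^\infty(0,T;L^p)$ for every $p<\infty$, so $\omega u\in L^\infty(0,T;L^q)$ for some $q>1$ and $\partial_t\omega\in L^\infty(0,T;W^{-1,q})$, and the standard Lions--Magenes/Strauss lemma then gives (after modification on a null set) weak continuity. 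Combining the two, for each fixed $s$,
$$
\|\omega(t)-\omega(s)\|^2=\|\omega(t)\|^2-2(\omega(t),\omega(s))+\|\omega(s)\|^2\longrightarrow0\qquad(t\to s),
$$
the outer terms by norm continuity and the cross term by weak continuity, whence $\omega\in C([0,T];L^2)$. Finally, since $\curl u=\omega$ and $\|u\|_1^2=\|u\|^2+\|\omega\|^2$, joining $u\in C([0,T];\mathcal{H})$ with $\omega\in C([0,T];L^2)$ delivers $u\in C([0,T];\mathcal{H}^1)$, and in particular $u(t)\in\mathcal{H}^1$ for every $t$.

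The conceptually hard part of the whole development --- justifying the enstrophy equality \eqref{DPL31} for a weak solution that need not be unique and whose vorticity lies only in $L^2$ --- has already been disposed of by the DiPerna--Lions renormalization of the scalar transport equation, so within the present proof the only delicate step is the strong-continuity upgrade. Its sensitive ingredient is the weak $L^2$-continuity of $\omega$, which hinges on squeezing just enough time regularity ($\partial_t\omega\in L^1(0,T;W^{-1,q})$) out of the transport equation; once this and the norm continuity from \eqref{DPL31} are secured, the Hilbert-space identity closes everything with no further estimates. One harmless bookkeeping remark: the harmonic component $u_c\in H_c$ carries no vorticity, $\curl u_c=0$, so its $\mathcal{H}^1$-continuity is already contained in the $\mathcal{H}$-continuity, and all the new information is encoded in the $H_1$-part via $\omega$.
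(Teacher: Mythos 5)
Your proof is correct and follows essentially the same route as the paper: the balance \eqref{enensbal} is obtained as the sum of the energy balance \eqref{balEnergy} and the enstrophy balance \eqref{DPL31}, and the strong $\mathcal{H}^1$-continuity is deduced by combining weak continuity with the continuity of the norm $t\mapsto\|\omega(t)\|^2$ coming from \eqref{DPL31}. The only (harmless) difference is that you extract the weak $L^2$-continuity of $\omega$ from the transport equation via a Strauss-type lemma, whereas the paper gets it for free from $u\in C([0,T];\mathcal{H})$ together with the $L^\infty(0,T;\mathcal{H}^1)$ bound.
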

\begin{proof}
The equation of balance~\eqref{enensbal} follows from~\eqref{balEnergy}
and~\eqref{DPL31}. The continuity in $\mathcal{H}^1$ follows from the continuity
in $\mathcal{H}$ (and, hence, weak continuity in $\mathcal{H}^1$) and the continuity
of the norm $t\to\|\omega(t)\|^2$, which follows from~\eqref{DPL31},
see~\cite{TemNS}.
\end{proof}

\section{Global attractor for the damped Euler system}
\label{Sec3}

For every solution of the damped Euler system we obtain from~\eqref{enensbal}
that
$$
\frac d{dt}\|u\|_1^2+2r\|u\|_1^2=2(g,u)\le2\|g\|_1\|u\|_1\le r\|u\|_1^2+r^{-1}\|g\|_1^2,
$$
so that by the Grownwall inequality
$$
\|u\|_1^2\le\|u(0)\|_1^2+r^{-2}\|g\|_1^2(1-e^{-rt})
$$
the ball~\eqref{ball} is also the absorbing ball
for the generalized semigroup of solution operators
$$
S(t)u^0=\{u(t)\}
$$
for the damped Euler system, where $\{u(t)\}$ is the section at time $t$
of all weak solutions with $u(0)=u^0$.

Our goal is to show that the generalized semigroup $S(t)$ has a weak
$(\mathcal{H}^1,\mathcal{H}^1_w)$ attractor in the sense of the
following definition (see~\cite{BV_MS}, \cite{BV}).
\begin{definition}\label{D:weakattr} A set $\mathscr A\subset{\mathcal H}^1$ is called an
$(\mathcal{H}^1,\mathcal{H}^1_w)$ attractor of the generalized semigroup $S(t)$ if
\par
1)  $\mathscr A$ is compact in the weak topology $\mathcal{H}^1_w$.
\par
2) $\mathscr A$ is strictly invariant: $S(t)\mathscr A=\mathscr A$.
\par
3) $\mathscr A$ attracts in the weak topology $\mathcal{H}^1_w$ bounded sets in $\mathcal{H}^1$.
\end{definition}

We first show that $S(t)$ a semigroup in the generalized sense.

\begin{lemma}\label{L:semi}
The family $S(t)$ has the semigroup property
\begin{equation}\label{semi}
S(t+\tau)u^0=S(t)S(\tau)u^0
\end{equation}
in the sense of the equality of sets.
\end{lemma}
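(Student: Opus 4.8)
The plan is to establish the set equality~\eqref{semi} by proving the two opposite inclusions, both of which reduce to elementary manipulations of weak solutions in the sense of Definition~\ref{D:weakE}: restriction to a subinterval, time translation (the system is autonomous), and concatenation at a common time. Throughout I use that by Theorems~\ref{T:WeakE} and~\ref{T:en-ens} every weak solution lies in $C([0,T];\mathcal{H})$ and has $\partial_t u\in L^2(0,T;\mathcal{H}^{-1})$, so that for each fixed $v\in\mathcal{H}^1$ the scalar function $t\mapsto(u(t),v)$ is absolutely continuous.

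For the inclusion $S(t+\tau)u^0\subseteq S(t)S(\tau)u^0$, I would take $w\in S(t+\tau)u^0$ and choose a weak solution $u$ on $[0,t+\tau]$ with $u(0)=u^0$ and $u(t+\tau)=w$. Its restriction to $[0,\tau]$ is again a weak solution, so its value $\xi:=u(\tau)$ belongs to $S(\tau)u^0$ by definition. The time-translated tail $\bar u(s):=u(s+\tau)$, $s\in[0,t]$, again satisfies~\eqref{identE} (restrict the test functions $\eta$ to $C^\infty_0(0,t)$ and use translation invariance), lies in $L^\infty(0,t;\mathcal{H}^1)$, and has $\bar u(0)=\xi$, $\bar u(t)=w$. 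Hence $w\in S(t)\xi\subseteq S(t)S(\tau)u^0$.

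The reverse inclusion $S(t)S(\tau)u^0\subseteq S(t+\tau)u^0$ contains the only real difficulty. Given $w\in S(t)S(\tau)u^0$, I would pick $\xi\in S(\tau)u^0$ with $w\in S(t)\xi$, together with weak solutions $u_1$ on $[0,\tau]$ (with $u_1(0)=u^0$, $u_1(\tau)=\xi$) and $u_2$ on $[0,t]$ (with $u_2(0)=\xi$, $u_2(t)=w$), and form the concatenation $u(s)=u_1(s)$ on $[0,\tau]$ and $u(s)=u_2(s-\tau)$ on $[\tau,t+\tau]$. Clearly $u\in L^\infty(0,t+\tau;\mathcal{H}^1)$, and since $u_1(\tau)=\xi=u_2(0)$ the $C([0,T];\mathcal{H})$ regularity of each piece makes $u$ continuous in $\mathcal{H}$ across $s=\tau$. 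The key point, and the main obstacle, is to verify the integral identity~\eqref{identE} for $u$ against an arbitrary $v\in\mathcal{H}^1$ and an arbitrary $\eta\in C^\infty_0(0,t+\tau)$, noting that such $\eta$ need \emph{not} vanish at the junction $s=\tau$. I would split the time integral at $\tau$ and integrate the term with $\eta'$ by parts on each piece separately, which is legitimate because $t\mapsto(u(t),v)$ is absolutely continuous on each subinterval. The boundary contributions at $s=\tau$ are $-(u_1(\tau),v)\eta(\tau)$ from the left piece and $+(u_2(0),v)\eta(\tau)$ from the right piece, and cancel exactly by the continuity $u_1(\tau)=u_2(0)$, while the contributions at the endpoints $0$ and $t+\tau$ vanish since $\eta\in C^\infty_0$. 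Combining the two pieces, each of which already satisfies~\eqref{identE} on its own open subinterval, then yields~\eqref{identE} for $u$ on all of $(0,t+\tau)$. Thus $u$ is a weak solution with $u(0)=u^0$ and $u(t+\tau)=w$, so $w\in S(t+\tau)u^0$, which completes the argument.
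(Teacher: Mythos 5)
Your proof is correct and takes essentially the same route as the paper: the easy inclusion via restriction and time translation, and the reverse inclusion via concatenation, with the boundary contributions at the junction $s=\tau$ cancelling because the two pieces agree there. The paper packages your integration-by-parts step as the integral identity~\eqref{identEE} with boundary terms $(u(0),v\eta(0))-(u(T),v\eta(T))$, valid for $\eta\in C^\infty[0,T]$, and then simply adds the identities on the two subintervals --- precisely the cancellation you carry out by hand.
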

\begin{proof}
The inclusion $S(t+\tau)u^0\subset S(t)S(\tau)u^0$
holds since every solution in the sense of Definition~\ref{D:weakE}
on the interval $[0,T]$ is also a solution on every  smaller interval $[\tau,T]$.
Let us prove the converse inclusion:
\begin{equation}\label{convincl}
S(t)S(\tau)u^0\subset S(t+\tau)u^0
\end{equation}

Any solution $u(t)$ satisfies on $[0,T]$ the integral identity
\begin{equation}\label{identEE}
\aligned
-\int_0^T(u,v\eta'(t))dt+\int_0^Tb(u,u,v\eta(t))dt+
r\int_0^T(u,v\eta(t))dt-\\-\int_0^T(g,v\eta(t))dt=
(u(0),v\eta(0))-(u(T),v\eta(T))
\endaligned
 \end{equation}
for every $v\in\mathcal{H}^1$ and $\eta\in C^\infty[0,T]$.
If this identity holds on the intervals $[0,\tau]$ and
$[\tau,t+\tau$, then adding them we see that it
holds on $[0,t+\tau]$ for every $\eta\in C^\infty[0,t+\tau]$.
This proves~\eqref{convincl}.
\end{proof}

The generalized semigroup is not known to be continuous
(the uniqueness is not proved), however,
the following two properties of it are, in a sense, a substitution
for the continuity and make it possible to
construct a weak attractor~\cite{BV_MS}, \cite{BV}.

\begin{lemma}\label{L:cont}
The generalized semigroup  $S(t)$ satisfies the following:
\begin{itemize}
  \item [1)]  $[S(t)X]_w\subset S(t)[X]_w\ \text{for any}\ X\subset B_0$,
  \item [2)] for every $y\in \mathcal{H}^1$ the set $S(t)^{-1}y\cap B_0$ is compact in $\mathcal{H}^1_w$.
\end{itemize}
Here $B_0$ is the absorbing ball~\eqref{ball}, and $[\ ]_w$ is the closure
in~$\mathcal{H}^1_w$.
\end{lemma}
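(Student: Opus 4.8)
The plan is to reduce both assertions to a single sequential compactness property for weak solutions that remain in the absorbing ball $B_0$, and then to read off 1) and 2) as immediate consequences. Since $\mathcal{H}^1$ is a Hilbert space, bounded subsets are weakly sequentially compact and the weak topology $\mathcal{H}^1_w$ is metrizable on them, so throughout it suffices to argue with sequences rather than nets. The key claim I would isolate is the following: \emph{if $u_n$ is any sequence of weak solutions of~\eqref{E} on $[0,T]$ with $u_n(0)\in B_0$, then along a subsequence there is a weak solution $u$ of~\eqref{E} with $u_n\to u$ $*$-weakly in $L^\infty(0,T;\mathcal{H}^1)$ and, moreover, $u_n(s)\rightharpoonup u(s)$ in $\mathcal{H}^1$ for every fixed $s\in[0,T]$.}

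To prove this claim I would first note that $B_0$ is invariant for the generalized semigroup: the a priori estimate~\eqref{apr}, which passes to the limit $\nu\to0$ and underlies the absorbing-ball argument of Section~\ref{Sec3}, gives $u_n(s)\in B_0$ for all $s$, so $u_n$ is bounded in $L^\infty(0,T;\mathcal{H}^1)$. Reading off $\partial_t u_n$ from~\eqref{E} and using the continuity of the trilinear form $b$ on $\mathcal{H}^1$ bounds $\partial_t u_n$ in $L^2(0,T;\mathcal{H}^{-1})$, exactly as in the proof of Theorem~\ref{T:WeakE}. Since $\mathcal{H}^1\Subset\mathcal{H}\hookrightarrow\mathcal{H}^{-1}$, the Aubin--Lions--Simon lemma yields a subsequence that is $*$-weakly convergent in $L^\infty(0,T;\mathcal{H}^1)$ and strongly convergent in $C([0,T];\mathcal{H})$; the strong $\mathcal{H}$-convergence (in particular in $L^2(0,T;\mathcal{H})$) is precisely what lets one pass to the limit in the trilinear term $b(u_n,u_n,v\eta)$ and verify that the limit $u$ satisfies the integral identity~\eqref{identE}, i.e.\ is a weak solution. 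Finally, convergence in $C([0,T];\mathcal{H})$ gives $u_n(s)\to u(s)$ strongly in $\mathcal{H}$ for every fixed $s$; combined with the uniform $\mathcal{H}^1$-bound this forces $u_n(s)\rightharpoonup u(s)$ in $\mathcal{H}^1$, which is the endpoint statement.

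Granting the claim, part 1) is immediate. Let $y\in[S(t)X]_w$ with $X\subset B_0$, and choose $y_n=u_n(t)\in S(t)X$ with $u_n(0)\in X$ and $y_n\rightharpoonup y$. The claim produces a weak solution $u$ with $u_n(0)\rightharpoonup u(0)$ and $u_n(t)\rightharpoonup u(t)$; since $u_n(0)\in X$ we get $u(0)\in[X]_w$, and uniqueness of weak limits gives $u(t)=y$, so $y=u(t)\in S(t)u(0)\subset S(t)[X]_w$. Part 2) is the mirror image. As $S(t)^{-1}y\cap B_0$ is a subset of the weakly compact ball $B_0$, it suffices to show it is weakly sequentially closed. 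Given $u^0_n\in S(t)^{-1}y\cap B_0$ with $u^0_n\rightharpoonup u^0$, pick weak solutions $u_n$ with $u_n(0)=u^0_n$ and $u_n(t)=y$; the claim gives a limiting weak solution $u$ with $u(0)=u^0$ (since $u_n(0)\rightharpoonup u^0$) and $u(t)=\lim u_n(t)=y$, whence $u^0\in S(t)^{-1}y$. Since $u^0\in B_0$ as well, the set is weakly closed, hence weakly compact.

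I expect the main obstacle to be the endpoint convergence $u_n(s)\rightharpoonup u(s)$ at fixed times, because the $*$-weak limit in $L^\infty(0,T;\mathcal{H}^1)$ carries no information about individual time slices. The remedy is the uniform bound on $\partial_t u_n$ in $L^2(0,T;\mathcal{H}^{-1})$: together with the compact embedding $\mathcal{H}^1\Subset\mathcal{H}$ it upgrades the convergence to $C([0,T];\mathcal{H})$ and, in the presence of the uniform $\mathcal{H}^1$-bound, pins down the weak $\mathcal{H}^1$-limit at every $s$. The second delicate point, the passage to the limit in the nonlinearity, is handled by the same strong $\mathcal{H}$-convergence, so that no uniqueness of the Euler solutions is needed anywhere in the argument.
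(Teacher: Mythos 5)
Your proof is correct, and its skeleton --- uniform bounds from the dissipative estimate, compactness of the solution sequence, passage to the limit in the weak formulation, then reading off 1) and 2) --- matches the paper's. The genuine difference is the device used to identify the endpoint values of the limit solution, which is the crux of the lemma. The paper works with the weaker compactness ($*$-weak in $L^\infty(0,T;\mathcal{H}^1)$, strong only in $L^2(0,T;\mathcal{H})$) and identifies the endpoints by passing to the limit in the integral identity~\eqref{identEE}, whose test functions $\eta\in C^\infty[0,T]$ need not vanish at $t=0,T$, so that the boundary terms $(x_n,v\eta(0))-(S(T)x_n,v\eta(T))$ appear explicitly and the hypotheses $x_n\rightharpoonup x_0$, $S(T)x_n\rightharpoonup u^T$ force $u(0)=x_0$ and $u(T)=u^T$; the same trick reappears in Theorem~\ref{T:Estrongattr}, cf.~\eqref{inttime}. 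You instead upgrade the compactness to strong convergence in $C([0,T];\mathcal{H})$ via the Aubin--Lions--Simon lemma, using the bound on $\partial_t u_n$ in $L^2(0,T;\mathcal{H}^{-1})$ (which does follow from the continuity of $b$ on $\mathcal{H}^1$, exactly as in Theorem~\ref{T:WeakE}), and this pins down $u_n(s)\rightharpoonup u(s)$ in $\mathcal{H}^1$ at every fixed $s$, in particular at the endpoints. Both mechanisms rest on the same a priori estimates; yours yields a slightly stronger intermediate claim (time-slice convergence for every $s$) and makes parts 1) and 2) literal corollaries of a single statement, while the paper's version gets by with classical $L^2$-in-time compactness at the price of carrying the boundary terms in the weak formulation. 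Your supporting observations --- invariance of $B_0$ under the generalized semigroup, weak compactness of $B_0$ and metrizability of $\mathcal{H}^1_w$ on bounded sets (so that sequential arguments suffice), and weak closedness of $B_0$ in part 2) --- are all correct and make explicit points the paper leaves implicit.
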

\begin{proof}
1) Let $u=u^T\in [S(T)X]_w$.  Then there exists a sequence
$x_n\in X$ such that $S(T)x_n\to u^T$ weakly in $\mathcal{H}^1_w$.
The sequence $\{x_n\}$ is bounded in $\mathcal{H}^1$ and contains
a subsequence weakly converging to $x_0\in[X]_w$.

The set of all solutions $u_n(t)=S(t)x_n$ is bounded in
$C([0,T];\mathcal{H}^1)$, where the set $\partial_tu_n$ is
bounded in $L^\infty(0,T;L^{2-\varepsilon}(\Omega))$. Therefore we
can extract a subsequence $u_n$ such that
\begin{equation}\label{conv1}
u_n\to u\ \text{$*$-weakly in}\ L^\infty(0,T;\mathcal{H}^1)\ \text{and strongly in}\
L^2(0,T;\mathcal{H}).
\end{equation}

Each $u_n$ satisfies~\eqref{identEE}:
\begin{equation*}
\aligned
-\int_0^T(u_n,v\eta'(t))dt+\int_0^Tb(u_n,u_n,v\eta(t))dt+
r\int_0^T(u_n,v\eta(t))dt-\\-\int_0^T(g,v\eta(t))dt=
(x_n,v\eta(0))-(S(T)x_n,v\eta(T)).
\endaligned
 \end{equation*}
The convergence~\eqref{conv1} makes it possible to
pass to the limit in the integral terms, while by hypotheses we have
$$
(S(T)x_n,v\eta(T))\to(u^T,v\varphi(T)),\qquad(x_n,v\eta(T))\to(u^T,v\eta(T)).
$$
This proves 1), since  $u$ is a solution with $u(0)=x_0$ and $u(T)=u^T$, where
$x_0\in[X]_w$.

\noindent
2) The second property is proved similarly. Let
$$
u_n(0)=x_n,\ u_n(t)=y,\ x_n\in B_0,\ x_n\to x\in B_0\ \text{weakly in }\ \mathcal{H}^1.
$$
Passing to the limit  as in in part 1) we obtain that
the limiting function $u$ is a solution with
$u(0)=x$, $u(t)=y$, $x\in B_0$.
\end{proof}

This lemma shows that  the hypotheses
of \cite[Theorem~6.1]{BV_MS} or \cite[Theorem~II.1.1]{BV}
are satisfied for the generalized semigroup~$S(t)$. As a result we
have proved the existence of the weak attractor.
\begin{theorem}\label{T:Eweakattr}
The generalized semigroup $S(t)$ corresponding to the damped
Euler system has a weak $(\mathcal{H}^1,\mathcal{H}^1_w)$-attractor
$\mathscr A$.
\end{theorem}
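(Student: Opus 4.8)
The plan is to construct $\mathscr A$ as the weak $\omega$-limit set of the absorbing ball $B_0$ and to obtain its three defining properties by invoking the abstract existence theorem for weak attractors of generalized semigroups, \cite[Theorem~II.1.1]{BV} (equivalently \cite[Theorem~6.1]{BV_MS}), whose hypotheses are exactly the absorbing property together with Lemmas~\ref{L:semi} and~\ref{L:cont}. Concretely, I would put
$$
\mathscr A=\omega(B_0):=\bigcap_{s\ge0}\Big[\bigcup_{t\ge s}S(t)B_0\Big]_w .
$$
Two structural facts make the scheme run smoothly. Since $B_0$ is a closed, bounded, convex subset of the Hilbert space $\mathcal H^1$, it is compact in $\mathcal H^1_w$; and since $\mathcal H^1$ is separable, the weak topology is metrizable on $B_0$, so every argument below may be phrased sequentially. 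Moreover, applying the absorbing property to $B_0$ itself yields a time $T_0$ with $S(t)B_0\subset B_0$ for $t\ge T_0$, so that $\bigcup_{t\ge s}S(t)B_0\subset B_0$ for $s\ge T_0$ and hence $\mathscr A\subset B_0$.

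Compactness and attraction then come almost for free. The set $\mathscr A$ is an intersection of weakly closed subsets of $B_0$, so it is weakly closed and bounded, hence compact in $\mathcal H^1_w$; this is part~1) of Definition~\ref{D:weakattr}. For the attraction part~3), let $B\subset\mathcal H^1$ be bounded and take $x_n\in B$, $t_n\to\infty$. For large $n$ we have $S(t_n)x_n\subset B_0$, so along a subsequence a selection converges weakly to some $a$, and the definition of $\mathscr A$ forces $a\in\mathscr A$; the usual contradiction argument (available because the weak topology is metrizable on $B_0$) upgrades this to weak attraction of $B$ by $\mathscr A$.

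The heart of the matter is the strict invariance $S(t)\mathscr A=\mathscr A$ (part~2) of Definition~\ref{D:weakattr}), which I would prove by two inclusions. The inclusion $S(t)\mathscr A\subset\mathscr A$ follows by commuting $S(t)$ with the weak closures in the definition of $\mathscr A$: using the semigroup identity of Lemma~\ref{L:semi} and the weak upper semicontinuity $[S(t)X]_w\subset S(t)[X]_w$ of part~1) of Lemma~\ref{L:cont}, one checks that $S(t)\mathscr A\subset\bigcap_{s}[\bigcup_{\tau\ge s}S(\tau)B_0]_w=\mathscr A$. The reverse inclusion $\mathscr A\subset S(t)\mathscr A$ is the real obstacle, and is where the absence of uniqueness bites. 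Given $a\in\mathscr A$, I would write $a=\lim a_n$ weakly with $a_n\in S(t_n)x_n$, $x_n\in B_0$, $t_n\to\infty$, choose $b_n\in S(t_n-t)x_n$ such that $a_n\in S(t)b_n$ (Lemma~\ref{L:semi}), and pass to a weak limit $b_n\rightharpoonup b\in B_0$. Since $t_n-t\to\infty$ we obtain $b\in\mathscr A$; and since $b_n\rightharpoonup b$ with $a_n\in S(t)b_n$ and $a_n\rightharpoonup a$, the weak closed-graph property furnished by part~1) of Lemma~\ref{L:cont} yields $a\in S(t)b\subset S(t)\mathscr A$. The delicate point, and the reason part~2) of Lemma~\ref{L:cont} is recorded, is that one must keep this backward-traced sequence inside the absorbing ball and guarantee that the limiting fibre is actually attained: weak compactness of the preimages $S(\tau)^{-1}y\cap B_0$ is exactly what secures the backward step in the general, non-uniqueness setting, and verifying that the limit $b$ indeed maps onto $a$ under $S(t)$ is the crux of the whole construction.
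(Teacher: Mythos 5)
Your proposal is correct and takes essentially the same route as the paper: the paper's proof consists precisely in observing that the absorbing ball \eqref{ball} together with Lemmas~\ref{L:semi} and~\ref{L:cont} verify the hypotheses of the abstract existence theorem \cite[Theorem~6.1]{BV_MS} (equivalently \cite[Theorem~II.1.1]{BV}), which is exactly your plan. Your additional unpacking of that abstract theorem --- the weak $\omega$-limit set construction, weak compactness and metrizability on $B_0$, and the invariance argument via the closed-graph property of part~1) and the preimage compactness of part~2) of Lemma~\ref{L:cont} --- is consistent with the Babin--Vishik machinery the paper cites and is not required beyond the citation.
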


Our next goal is to show that the attractor $\mathscr A$ is in fact
a (strong) global attractor in the sense of Definition~\ref{D:attr},
the only difference being that the semigroup $S(t)$ now
is a generalized (multi-valued) semigroup. The key role below is played by the
equation of balance of energy and enstrophy~\eqref{enensbal}.

\begin{theorem}\label{T:Estrongattr}
The attractor $\mathscr A$ is the (strong) global attractor.
\end{theorem}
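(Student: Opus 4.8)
The plan is to apply the \emph{energy method} of Ball, Ghidaglia and Moise--Rosa--Wang: in a Hilbert space, weak convergence together with convergence of norms upgrades to strong convergence, and the missing convergence of norms is to be squeezed out of the energy--enstrophy balance~\eqref{enensbal}. Since $\mathscr A$ is already a weak $(\mathcal H^1,\mathcal H^1_w)$-attractor, it remains only to show that $\mathscr A$ is \emph{strongly} compact in $\mathcal H^1$ and attracts bounded sets in the strong topology of $\mathcal H^1$. Both reduce to the following asymptotic compactness statement, which I would establish first: if $\xi_n=u_n(0)\rightharpoonup\xi$ weakly in $\mathcal H^1$, where each $u_n$ is a weak solution defined on $(-\infty,0]$ and uniformly bounded, $\|u_n(t)\|_1\le r^{-1}\|g\|_1$ as in~\eqref{estonA}, then in fact $\xi_n\to\xi$ strongly in $\mathcal H^1$.

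To prove this, I would solve the balance~\eqref{enensbal} as a linear ODE for $\|u_n(t)\|_1^2$ with integrating factor $e^{2rt}$, obtaining for every fixed $s<0$
\begin{equation}\label{plan:identity}
\|u_n(0)\|_1^2=e^{2rs}\|u_n(s)\|_1^2+2\int_s^0 e^{2r\tau}(u_n(\tau),g)_1\,d\tau.
\end{equation}
Using the a priori bounds already available (weak-$*$ compactness in $L^\infty(s,0;\mathcal H^1)$, strong compactness in $L^2(s,0;\mathcal H)$, and boundedness of $\partial_t u_n$ in $L^2(s,0;\mathcal H^{-1})$) I would extract, along a subsequence and by a diagonal argument over $s\to-\infty$, a limiting complete bounded weak solution $u$ with $u(0)=\xi$; by the characterization of the weak attractor as the set of values of complete bounded trajectories this $u$ lies on $\mathscr A$, and $u$ itself satisfies~\eqref{plan:identity}.

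The crucial step is the limit passage in~\eqref{plan:identity}. Since $(\cdot,g)_1$ is a bounded linear functional on $\mathcal H^1$, hence an element of $\mathcal H^{-1}$, and $\tau\mapsto e^{2r\tau}g$ belongs to $L^1(s,0;\mathcal H^1)$, the weak-$*$ convergence $u_n\to u$ in $L^\infty(s,0;\mathcal H^1)$ is exactly enough to give
$$
\int_s^0 e^{2r\tau}(u_n(\tau),g)_1\,d\tau\ \longrightarrow\ \int_s^0 e^{2r\tau}(u(\tau),g)_1\,d\tau .
$$
Combining this with the uniform bound $e^{2rs}\|u_n(s)\|_1^2\le e^{2rs}r^{-2}\|g\|_1^2$ and the exact identity~\eqref{plan:identity} for the limit $u$ yields
$$
\limsup_{n\to\infty}\|u_n(0)\|_1^2\le \|u(0)\|_1^2+e^{2rs}\big(r^{-2}\|g\|_1^2-\|u(s)\|_1^2\big).
$$
Letting $s\to-\infty$ annihilates the last term, so $\limsup_n\|\xi_n\|_1^2\le\|\xi\|_1^2$; together with weak lower semicontinuity of the norm this forces $\|\xi_n\|_1\to\|\xi\|_1$ and hence $\xi_n\to\xi$ strongly in $\mathcal H^1$.

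Finally I would deduce the two required properties. Strong compactness of $\mathscr A$ follows by applying the above to any sequence $\xi_n\in\mathscr A$, each lying on a complete bounded trajectory by strict invariance. Strong attraction follows by contradiction: if some $\xi_n\in S(t_n)x_n$, with $x_n$ ranging over a bounded set and $t_n\to\infty$, stayed a distance $\ge\varepsilon$ from $\mathscr A$, then, after the time-translation $v_n(\tau):=u_n(t_n+\tau)$ (legitimate since~\eqref{E} is autonomous and the translates are again weak solutions on $[s,0]$ for $n$ large), the sequence $v_n$ satisfies the same hypotheses, producing a strongly convergent subsequence $\xi_n\to\xi\in\mathscr A$ --- the desired contradiction. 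I expect the main obstacle to be the construction of the limiting complete trajectory $u\in\mathscr A$ and the verification that the multivalued, possibly non-continuous semigroup does not obstruct the limit passage in~\eqref{identEE}; once the weak-$*$ limit and its membership in $\mathscr A$ are secured, the norm-convergence argument driven by~\eqref{plan:identity} is routine.
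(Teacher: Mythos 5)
Your proposal is correct and follows essentially the same route as the paper: the energy method driven by the energy--enstrophy balance \eqref{enensbal}, solved with the integrating factor $e^{2rt}$, producing a limiting complete bounded Euler trajectory and upgrading weak $\mathcal H^1$ convergence to strong convergence via convergence of norms. The only cosmetic difference is that the paper integrates the balance over the whole interval $[-t_n,0]$ (so the tail term $\|u_n(-t_n)\|_1^2e^{-2rt_n}$ vanishes in one limit), whereas you work on a fixed window $[s,0]$ and let $s\to-\infty$ afterwards; both correctly accommodate trajectories defined only on $[-t_n,0]$ rather than on all of $(-\infty,0]$.
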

\begin{proof}
We have to prove the asymptotic compactness  of $S(t)$, that is,
for every sequence $\{u_n^0\}$ bounded in $\mathcal{H}^1$ and
every sequence $t_n\to+\infty$ the sequence (of sets) $S(t_n)u_n^0$ is
precompact in~$\mathcal{H}^1$.

Let $u_n(t)$, $t\ge-t_n$ be a sequence of solutions  of the
damped Euler system:
$$
\left\{
\begin{array}{l}
\partial _{t}u_n+(u_n,\nabla )u_n+\nabla p_n+ru_n=g(x), \\
\div u_n=0,\quad u_n\vert_{t=-t_n}=u^0_n.
\end{array}
\right.
$$
Then $u_n(0)\in S(t_n)u_n^0$ and we have to verify that
$\{u_n(0)\}_{n=0}^\infty$ is precompact in~$\mathcal{H}^1$.

The solutions $u_n(t)$, $t\ge-t_n$, are bounded in $C_b([-T,\infty),\mathcal{H}^1)$
for $T\le t_n$ and we can extract a subsequence
$$
u_n(0)\to \bar u\in \mathcal{H}^1\ \  \text{weakly in} \ \  \mathcal{H}^1.
$$
Along a further subsequence we have
$$
u_n\to u\ \text{$*$-weakly in}\ L^\infty(-T,T;\mathcal{H}^1)\ \text{and strongly in}\
L^2(-T,T;\mathcal{H}).
$$
This is enough to pass to the limit in the integral identities satisfied
by $u_n$ to obtain that the following integral identity holds for $u$:
$$
\aligned
-\int_{\mathbb{R}}(u,v\eta'(t))dt+&\int_{\mathbb{R}}b(u,u,v\eta(t))dt+\\
r&\int_{\mathbb{R}}(u,v\eta(t))dt-\int_{\mathbb{R}}(g,v\eta(t))dt=0,
\quad \eta\in C^\infty_0({\mathbb{R}}),
\endaligned
$$
which gives that $u$ is a solution of the damped Euler system
bounded on $t\in\mathbb{R}$. Next, we have
\begin{equation}\label{u=u(0)}
u(0)=\bar u.
\end{equation}
This is standard \cite{TemNS}. On one hand, for $\eta(0)\ne0$ we have
\begin{equation}\label{inttime}
\aligned
-\int_{-\infty}^0(u,v\eta'(t))dt+&\int_{-\infty}^0b(u,u,v\eta(t))dt+\\
r&\int_{-\infty}^0(u,v\eta(t))dt-\int_{-\infty}^0(g,v\eta(t))dt=-(\bar u,v)\eta(0),
\endaligned
\end{equation}
On the other hand, multiplying the equation
$$
\frac d{dt}(u,v)+b(u,u,v)+r(u,v)=(g,v)
$$
by the same $\eta$ and integrating from $-\infty$ to $0$
we obtain equality~\eqref{inttime} with the right-hand side equal to
$-( u(0),v)\eta(0)$. This gives~\eqref{u=u(0)}.

Thus, we have that $u_n(0)\to u(0)$ weakly in $\mathcal{H}^1$,
we now show that $u_n(0)\to u(0)$ strongly in $\mathcal{H}^1$.
We multiply the balance equation~\eqref{enensbal} for $u_n$ by
$e^{2rt}$ and integrate from $-t_n$ to $0$. We obtain
$$
\|u_n(0)\|_1^2=\|u_n(-t_n)\|_1^2e^{-2rt_n}+2\int_{-t_n}^0(u_n(t),g)_1e^{2rt}dt.
$$
Since $u_n(-t_n)$ are uniformly bounded in~$\mathcal{H}^1$
and
$$ u_n\to u\quad \text{$*$-weakly in}\quad L^\infty_{loc}(\mathbb{R};\mathcal{H}^1)$$
we can pass to the limit as $n\to\infty$ to obtain
$$
\lim_{n\to\infty}\|u_n(0)\|_1^2=2\int_{-\infty}^0(u(t),g)_1e^{2rt}dt.
$$
The complete trajectory $u(t)$ also satisfies the balance equation,
and acting similarly we obtain
$$
\|u(0)\|_1^2=2\int_{-\infty}^0(u(t),g)_1e^{2rt}dt.
$$
Thus, we have shown that
$$
\lim_{n\to\infty}\|u_n(0)\|_1^2=\|u(0)\|_1^2,
$$
which along with the established weak convergence
gives that
$$
u_n(0)\to u(0)\quad\text{strongly in} \quad \mathcal{H}^1,
$$
and completes the proof.
\end{proof}

\section{Upper semi-continuity of the attractors
in the limit of vanishing viscosity}
\label{Sec4}

In this concluding section we study the dependence
of the attractors $\mathscr A_\nu$ of the damped
Navier--Stokes system on the viscosity coefficient
$\nu$ as $\nu\to0^+$. In the previous section we have
shown that the damped Euler system (with $\nu=0$) has
the global attractor
$$
\mathscr A_{\nu=0}=:\mathscr A_0.
$$
Furthermore, uniformly for $\nu\ge0$ the following estimate holds:
$$
\sup_{u\in\mathscr A_\nu}\|u\|_1\le\frac{\|g\|_1}r.
$$

\begin{theorem}\label{T:strong_upper_cont}
The attractors $\mathscr{A}_\nu$
depend upper semi-continuously on  $\nu$ as $\nu\to0^+$.
In other words
\begin{equation}\label{strong-cont}
\lim_{\nu\to0^+}\dist_{\mathcal H^1}(\mathscr{A}_\nu,\mathscr{A}_0)=0,
\end{equation}
where
\begin{equation}\label{defdist}
\dist_{\mathcal H^1}(X,Y):=
\sup_{x\in X}\Inf_{y\in Y}\|x-y\|_{\mathcal H^1}.
\end{equation}
\end{theorem}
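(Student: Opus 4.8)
The plan is to argue by contradiction, following the asymptotic compactness scheme of Theorem~\ref{T:Estrongattr}, with the viscous term treated as a small and, crucially, sign-definite perturbation. Suppose \eqref{strong-cont} fails: then there exist $\delta_0>0$, a sequence $\nu_n\to0^+$, and points $u_n\in\mathscr A_{\nu_n}$ with $\dist_{\mathcal H^1}(u_n,\mathscr A_0)\ge\delta_0$. Since each $\mathscr A_{\nu_n}$ is strictly invariant under the (single-valued) Navier--Stokes semigroup, through every $u_n$ there passes a complete trajectory $u_n(t)$, $t\in\mathbb R$, lying in $\mathscr A_{\nu_n}$ with $u_n(0)=u_n$; by \eqref{estonA} these satisfy $\|u_n(t)\|_1\le\|g\|_1/r$ uniformly in $n$ and $t$.

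First I would extract a limiting complete trajectory. The uniform bound yields $u_n(0)\to\bar u$ weakly in $\mathcal H^1$, and along a further subsequence
$$
u_n\to u\ \text{$*$-weakly in}\ L^\infty(-T,T;\mathcal H^1)\ \text{and strongly in}\ L^2(-T,T;\mathcal H)
$$
for every $T>0$. Passing to the limit in the weak formulation of \eqref{NS}, the strong $L^2(\mathcal H)$-convergence disposes of the nonlinearity exactly as in Theorem~\ref{T:WeakE}, while the viscous contribution is $O(\nu_n)$ — it has the form $\nu_n\int(\nabla u_n,\nabla v)\eta\,dt$ with $u_n$ bounded in $L^\infty(\mathcal H^1)$, hence it vanishes. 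Therefore $u$ is a complete bounded weak solution of the damped Euler system \eqref{E}, and the reasoning leading to \eqref{u=u(0)} shows $u(0)=\bar u$. Since the global attractor $\mathscr A_0$ contains every complete bounded trajectory of the generalized Euler semigroup, we conclude $u(0)\in\mathscr A_0$.

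The decisive step is upgrading $u_n(0)\to u(0)$ from weak to strong convergence in $\mathcal H^1$, for which it suffices to prove $\limsup_n\|u_n(0)\|_1^2\le\|u(0)\|_1^2$; together with weak lower semicontinuity of the norm this forces $\|u_n(0)\|_1\to\|u(0)\|_1$ and hence strong convergence. Multiplying the Navier--Stokes balance \eqref{NSee} by $e^{2rt}$ and integrating from $-T$ to $0$ gives
$$
\|u_n(0)\|_1^2=\|u_n(-T)\|_1^2e^{-2rT}-2\nu_n\int_{-T}^0\|\Delta u_n\|^2e^{2rt}dt
+2\int_{-T}^0(u_n(t),g)_1e^{2rt}dt.
$$
Here the vanishing viscosity works in our favour: the viscous term is nonpositive and may simply be discarded to produce an upper bound. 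Using $\|u_n(-T)\|_1\le\|g\|_1/r$ and the $*$-weak convergence $u_n\to u$ in $L^\infty(-T,0;\mathcal H^1)$ tested against the continuous functional $w\mapsto\int_{-T}^0(w,g)_1e^{2rt}dt$, I pass to the limit in $n$ and then let $T\to+\infty$ to obtain
$$
\limsup_{n\to\infty}\|u_n(0)\|_1^2\le 2\int_{-\infty}^0(u(t),g)_1e^{2rt}dt.
$$

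Finally, the limiting Euler trajectory obeys the enstrophy balance \eqref{enensbal}, which integrated from $-\infty$ to $0$ yields precisely $\|u(0)\|_1^2=2\int_{-\infty}^0(u(t),g)_1e^{2rt}dt$, exactly as in the proof of Theorem~\ref{T:Estrongattr}. Hence $\limsup_n\|u_n(0)\|_1^2\le\|u(0)\|_1^2$, so $u_n(0)\to u(0)$ strongly in $\mathcal H^1$ while $u(0)\in\mathscr A_0$, contradicting $\dist_{\mathcal H^1}(u_n,\mathscr A_0)\ge\delta_0$. I expect the only genuine obstacle to be the control of the $\nu$-dependent dissipation in the balance identity; the dissipative sign neutralizes it for the upper bound, and the remaining estimates are routine adaptations of Theorems~\ref{T:WeakE} and~\ref{T:Estrongattr}.
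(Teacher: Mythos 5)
Your proof is correct and follows essentially the same route as the paper's: both pass complete bounded trajectories through points of $\mathscr A_{\nu_n}$, extract a limiting complete bounded Euler trajectory (the viscous term vanishing in the weak formulation), discard the sign-definite dissipation in the balance \eqref{NSee} to obtain $\limsup_n\|u_n(0)\|_1^2\le 2\int_{-\infty}^0(u(t),g)_1e^{2rt}\,dt=\|u(0)\|_1^2$ via the Euler enstrophy balance \eqref{enensbal}, and conclude strong $\mathcal H^1$ convergence from weak convergence plus convergence of norms. The only differences are cosmetic: you frame the argument as a contradiction where the paper picks points realizing the Hausdorff distance, and you integrate over a fixed interval $[-T,0]$ before letting $T\to\infty$ where the paper integrates over $[-t_n,0]$ directly.
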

\begin{proof}
We  take  an arbitrary sequence $\nu_n\to0^+$, and for every
$\nu_n$ choose a point on the attractor
$\mathscr{A}_{\nu_n}$ of equation \eqref{NS} with $\nu=\nu_n$. Specifically,
we choose the point on $\mathscr{A}_{\nu_n}$, whose distance from $\mathscr{A}_{0}$ is equal
to the distance from $\mathscr{A}_{\nu_n}$ to $\mathscr{A}_{0}$. In view of the compactness
of   $\mathscr{A}_{\nu_n}$ and  $\mathscr{A}_{0}$ such a point exists.
These points lie on~$\mathscr{A}_{\nu_n}$ and therefore there are complete
trajectories passing through them, and we can denote these points by $u_n(0)$,
so that
\begin{equation}\label{unibd}
u_n(0)\in\mathscr{A}_{\nu_n}, \quad u_n\in C_b(\mathbb{R},\mathcal H^1),
\quad \|u_n\|_{C_b(\mathbb{R},\mathcal H^1)}\le r^{-1}\|g\|_1,
\end{equation}
and in view of our choice
\begin{equation}\label{dist}
\dist_{\mathcal{H}^1}(u_n(0),\mathscr{A}_{0})=
\dist_{\mathcal{H}^1}(\mathscr{A}_{\nu_n},\mathscr{A}_0).
\end{equation}
In view of~\eqref{unibd} we can extract a subsequence $u_{\nu_n}$ for which
for a $\bar u\in \mathcal{H}^1$
$$
u_{\nu_n}(0)\to \bar u\quad  \text{weakly in} \quad  \mathcal{H}^1\
\text{as}\quad n\to\infty,
$$
and along a further subsequence we have
$$
u_{\nu_n}(0)\to u_0\ \text{$*$-weakly in}\ L^\infty_{\mathrm{loc}}(\mathbb{R},\mathcal{H}^1)\
\text{and strongly in}\ L^2_{\mathrm{loc}}(\mathbb{R},\mathcal{H}).
$$

The solutions $u_{\nu_n}$, by definition, satisfy the integral identity
$$
\aligned
-\int_{\mathbb{R}}(u_{\nu_n},v\eta'(t))dt+\int_{\mathbb{R}}b(u_{\nu_n},u_{\nu_n},v\eta(t))dt+\\+
\int_{\mathbb{R}}\nu_n(\curl u_{\nu_n},\curl v\eta(t))dt+
r\int_{\mathbb{R}}(u_{\nu_n},v\eta(t))dt-\int_{\mathbb{R}}(g,v\eta(t))dt=0.
\endaligned
$$
We now pass to the limit in this  identity taking into account that
$$
\nu_n(\curl u_{\nu_n},\curl v)\to 0\ \text{as}\ \nu_n\to0,
$$
and obtain that $u_0$ is a solution (a complete trajectory) of the damped Euler
system and therefore satisfies the balance equation~\eqref{enensbal}.
In addition, as in Theorem~\ref{T:Estrongattr},  we can show that
$u(0)=\bar u$, so that
$$
u_{\nu_n}(0)\to u(0)\quad  \text{weakly in} \quad  \mathcal{H}^1.
$$

The complete trajectories $u_n=u_n(t)$ of the damped Navier--Stokes system~\eqref{NS} satisfy the balance equation~\eqref{NSee}. We drop there the second (non-negative) term
multiply the resulting inequality by $e^{2rt}$ and integrate from
$-t_n$ to $0$, where $t_n\to+\infty$. We obtain
\begin{equation*}
\|u_{\nu_n}(0)\| _{1}^{2}\le
\| u_{\nu_n}(-t_{n})\|_{1}^{2}e^{-2rt_{n}}+2\int_{-t_{n}}^{0}(u_{\nu_n}(t),g)_{1}e^{2rt}dt.
\end{equation*}
In the limit as $n\to\infty$ this gives that
$$
\limsup_{n\to\infty}\|u_{\nu_n}(0)\| _{1}^{2}\le
2\int_{-\infty }^{0}(u_0(t),g)_{1}e^{2rt}dt.
$$
For the solution $u_0$ as in Theorem~\ref{T:Estrongattr} we have
$$
\|u_{0}(0)\| _{1}^{2}=
2\int_{-\infty}^{0}(u_{0}(t),g)_{1}e^{2rt}dt,
$$
and together  with the previous inequality this gives that
\begin{equation}\label{limsup}
\limsup_{n\to\infty}\|u_{\nu_n}(0)\| _{1}^{2}\le
\|u_0(0)\|^2_1.
\end{equation}
Since by the weak convergence we always have
$$
\|u_0(0)\|_1\le\liminf_{n\to\infty}\|u_{n}(0)\| _{1},
$$
it follows from~\eqref{limsup} that
$$
\lim_{n\to\infty}\|u_{\nu_n}(0)\|_1=\|u_{0}(0)\| _{1},
$$
and, finally, that
$$
\lim_{n\to\infty}\|u_{\nu_n}(0)-u_0(0)\| _{1}=0.
$$

Taking into account~\eqref{dist} we obtain that
\begin{equation}\label{limdist}
\lim_{n\to\infty}\dist_{\mathcal H^1}(\mathscr{A}_{\nu_n},\mathscr{A}_0)=0.
\end{equation}
Since in the course of the proof we have been several times  passing to subsequences
we have actually shown that
\begin{equation}\label{liminfdist}
\liminf_{\nu_n\to0^+}\dist_{\mathcal H^1}(\mathscr{A}_{\nu_n},\mathscr{A}_0)=0
\end{equation}
for  \emph{any}  sequence  $\nu_n\to0^+$.
This obviously implies~\eqref{strong-cont}. The proof is complete.
\end{proof}

\begin{remark}
A similar result in $\mathbb{R}^2$ was recently obtained in~\cite{IChMZ}.
\end{remark}

\end{document}